\documentclass[final,1p,times]{elsarticle}


\usepackage{amssymb}
\usepackage{amsthm}
\usepackage{hyperref}
\usepackage{amsmath,amssymb,amsopn,amsfonts,mathrsfs,amsbsy,amscd}
\usepackage{longtable}
\sloppy




\journal{}

\newtheorem{definition}{Definition}[section]
\newtheorem{theorem}{Theorem}[section]
\newtheorem{proposition}{Proposition}[section]
\newtheorem{lemma}{Lemma}[section]

\numberwithin{equation}{section}

\begin{document}
\bibliographystyle{amsplain}
\begin{frontmatter}
\title{Biderivations, local and 2-local derivation and automorphism of simple $\omega$-Lie algebras}


\author[label2]{Hassan Oubba}
\address[label2]{Moulay Ismail University of Meknes\\ Faculty of Sciences Meknes
B.P. 11201 Zitoune Meknes, Morocco\\
{hassan.oubba@edu.umi.ac.ma}}

\begin{abstract}
 Given a finite-dimensional complex simple $\omega$-Lie algebras $\mathfrak{}$ over $\mathbb{C}$.  We prove that every local ,$2-$local derivation is a derivation and every local (resp. 2-local) automorphisms  are  automorphisms or an anti-automorphis (resp. automorphism). We characterize also biderivation, $\frac{1}{2}$-derivation and local (2-local) $\frac{1}{2}$-derivation of $\mathfrak{g}$. 
\end{abstract}
\begin{keyword}
Derivation, Local derivation, $2$-Local derivation, Automorphisms, Local automorphisms, $2$-Local automorphisms, Biderivation, $\omega$-Lie algebras, Lie algebras.

       {\it{ 2020 Mathematics Subject Classification:}} $17A32; 17B32; 15A99; 17B60; 17A30$.
\end{keyword}
\end{frontmatter}

\section{Introduction} \label{Introduction}
The history of local mappings begins with the Gleason-Kahane-Zelazko theorem in \cite{gle} and \cite{kah}, which is a fundamental contribution to the theory of Banach algebras. This theorem asserts that every unital linear functional $F$ on a complex unital Banach algebra $A$, such that $F(a)$ belongs to the spectrum $\sigma(a)$ of $a$ for every $a \in A$, is multiplicative.

In the past two decades, a well-known and active direction in the study of the automorphism (respectively, derivation) of Lie is the problem about local automorphisms (respectively, local derivations) and 2-local automorphisms (respectively, 2-local derivations). In \cite{Cos} the autors prove that any local automrphism of  a finite dimensional simple Lie algebra over an
algebraically closed field $\mathbb{K}$ of characteristic zero is an automorphism or an anti-automorhism. In, \cite{loc} it was proved that any 2-local automorphism of finite-dimensional simple Lie algebra over an
algebraically closed field $\mathbb{K}$ of characteristic zero is an automorphism. Similar
results have been obtained in \cite{Ayo2, Ayo3} for local and  2-local derivation of a finite-dimension Lie algebras. It is natural to study corresponding generalizations of automorphisms or derivations of $\omega$-Lie algebras.

Subsequently, in \cite{kad}, the concept of local derivation is introduced, and it is proved that each continuous local derivation from a von Neumann algebra into its dual Banach bimodule is a derivation. Numerous new results regarding the description of local derivations and local automorphisms of algebras have since emerged (see, for example, \cite{oub, kad, Ayo1, Ayo2, sem}).

$\omega$-Lie algebras are a natural generalization of Lie algebras. They were introduced by Nurowski in \cite{nur1}, motivated by the study of isoparametric hypersurfaces in Riemannian geometry. There have been extensive works on $\omega$-Lie algebras (see \cite{nur2, Chen0, Chen1, Chen2} and references therein). In the cases of dimensions $1$ and $2$, there are no nontrivial $\omega$-Lie algebras. The first example of a nontrivial $3$-dimensional $\omega$-Lie algebra was given by Nurowski \cite{nur1}, where the author provided a classification of $3$-dimensional $\omega$-Lie algebras over the field of real numbers under the action of the $3$-dimensional orthogonal group.

On the other hand, the notion of biderivations has appeared in different areas. Maksa used biderivations to study real Hilbert space \cite{Mak}. Vukman investigated symmetric biderivations in prime and semiprime rings \cite{Vuk}. The well-known result that every biderivation on a noncommutative prime ring $A$ is of the form $\lambda[x, y]$ for some $\lambda$ belonging to the extended centroid of $A$ was discovered independently by Bresar et al \cite{Bresar}, Skosyrskii \cite{Sko}, and Farkas and Letzter \cite{Far}. Biderivations were connected with noncommutative Jordan algebras by Skosyrskii and with Poisson algebras by Farkas and Letzter. Besides their wide applications, biderivations are interesting in their own right and have been introduced to Lie algebras \cite{Wang}, which have been studied by many authors recently. In particular, biderivations are closely related to the theory of commuting linear maps, which has a long and rich history. For the development of commuting maps and their applications, we refer to the survey \cite{br}. It is worth mentioning that Bresar and Zhao considered a general but simple approach for describing biderivations and commuting linear maps on a Lie algebra $L$ having their ranges in an $L$-module \cite{brz}. This approach covered most of the results in \cite{br, brz, oub, Mak, Vuk, Bresar} and inspires us to generalize their method to Hom-Lie algebras.

In \cite{oubb}, we give a description of local (2-local) derivations and automorphisms and biderivations of three dimensional complex $\omega$-Lie algebras in this paper we give similar result for four dimensional complex $\omega$-Lie algebras

In this paper, the organization is as follows. In Section 2, we recall some definitions and results needed for this study and we prove that any symmetric derivation of semisimple Lie algebra is trivial. In Section 4, we investigate local and $2$local derivations on finite-dimensional semisimple $\omega$-Lie algebras $\mathfrak{g}$ over $\mathbb{C}$, we prove that every local and 2-local derivation of $\mathfrak{g}$ are derivations. Section 5 is devoted to local and $2$-local automorphisms on $\mathfrak{g}$, we prove that any local automorphism on $\mathfrak{g}$ is is an automrphism or an anti-automrphism and any 2-local automorphism on $\mathfrak{g}$ is an automorphism. In section 6, we characterize all biderivations of simple $\omega$-Lie algebras and we give the chacterization of all biderivation  on $4$-dimensional $\omega$-Lie algebras. In the last section we give a description of $\frac{1}{2}$-derivation of simpl $\omega$-Lie algebras. 
\section{Biderivations of semisimple Lie algebras}
\begin{definition}
    For a Lie algebra $(\mathfrak{g}, [\cdot,\cdot])$, a \emph{derivation} is a linear map $D\in End(\mathfrak{g})$ such that 
    $$D([x,y])=[D(x),y]+[x,D(y)],\quad \forall x,y \in \mathfrak{g}.$$
\end{definition}
\begin{definition}
For a Lie algebra $(\mathfrak{g}, [\cdot,\cdot])$, a \emph{biderivation} is a bilinear map $\delta\in\mathrm{Bil}(\mathfrak{g},\mathfrak{g};\mathfrak{g})$ satisfying the following properties:
$$\delta([x,y],z)=[x,\delta(y,z)]+[\delta(x,z),y],$$
 $$\delta(x,[y,z])=[y,\delta(x,z)]+[\delta(x,y),z],$$
 for all $x,y,z \in \mathfrak{g}$.
\end{definition}
 Let $BDer(\mathfrak{g})$ represent the collection of all biderivations defined on $\mathfrak{g}$. It is evident that this set forms a vector space.\\
  A biderivation $\delta \in BDer(\mathfrak{g})$ is termed symmetric if $\delta(x,y)=\delta(y,x)$ holds for all $x,y \in \mathfrak{g}$; conversely, it is designated as skew-symmetric if $\delta(x,y)=-\delta(y,x)$ for all $x,y \in \mathfrak{g}$. We denote the subspaces of all symmetric biderivations and all skew-symmetric biderivations on $\mathfrak{g}$ as $BDer_+(\mathfrak{g})$ and $BDer_-(\mathfrak{g})$, respectively.
  
We recall new the definition of  anti-commuting (resp. commuting) linear map and some relation ship between it and the the biderivation of Lie algebra. Using this relation ship we show that any symmetric biderivation on quadratic Lie algebra such that $Der(\mathfrak{g})=ad(\mathfrak{g})$ and $Z(\mathfrak{g})=\lbrace 0\rbrace$  is trivial.
\begin{definition}
 Let $(\mathfrak{g},[\,,\,])$ be a Lie algebra and $\alpha : \mathfrak{g} \rightarrow \mathfrak{g}, x \mapsto \alpha(x)$ is a linear map.
 \begin{enumerate}
  \item $\alpha$ is called an anti-commuting map on the Lie algebra $(\mathfrak{g},[\,,\,])$ if 
  $$[\alpha(x),y]= \, [\alpha(y),x],\quad \forall x, y \in \mathfrak{g};$$
  \item $\alpha$ is called a commuting map on $(\mathfrak{g},[\,,\,])$ if 
  $$[\alpha(x),y]= \, [x,\alpha(y)],\quad \forall x, y \in \mathfrak{g};$$
\item $\alpha$ is in the centroid of  $(\mathfrak{g} ,[\,,\,])$ if 
  $$\alpha([x,y])= \, [\alpha (x),y],\quad \forall x, y \in \mathfrak{g}.$$
 \end{enumerate} 
\end{definition}
The centroid of the Lie algebra $(\mathfrak{g},[\,,\,])$ will be noted $\mbox{Cent}(\mathfrak{g})$, the set of all commuting map on $\mathfrak{g}$ will be noted by $Cmap(\mathfrak{g})$ and the set of anti-commuting map on the Lie algebra $(\mathfrak{g},[\,,\,])$ will be noted $\mbox{Acmap}(\mathfrak{g})$ and the set of all will be noted and the set of commuting map on the Lie algebra $(\mathfrak{g},[\,,\,])$ will be noted $\mbox{Cmap}(\mathfrak{g}).$\\

The proof of the following proposition is obvious.
\begin{proposition}\label{hom}
	\begin{enumerate}	
		\item 	Let $(\mathfrak{g},[,])$ be a Lie algebra with an anti-commuting map $\alpha$.
		 Then the bilinear map $\delta$ given by $\delta(x,y):=[\alpha(x),y]$ is a symmetric biderivation of $\mathfrak{g}$.
		\item If $(\mathfrak{g},[,])$ is a Lie algebra such that $Der(\mathfrak{g})=ad(\mathfrak{g})$, we have the following converse:
        \begin{enumerate}
            \item Let $\delta :\mathfrak{g} \times \mathfrak{g} \rightarrow \mathfrak{g}, (x,y) \mapsto \delta(x , y)$  be a skew-symmetric biderivation of $\mathfrak{g}$. Then, there exist a commuting map $\alpha$  on Lie algebra $(\mathfrak{g},[,])$ such that 
		$$ \delta(x, y )=[\alpha(x),y], \quad \forall x,y \in \mathfrak{g}$$.
        \item Let $\delta :\mathfrak{g} \times \mathfrak{g} \rightarrow \mathfrak{g}, (x,y) \mapsto \delta(x , y)$  be a  symmetric biderivation of $\mathfrak{g}$. Then, there exist an anti-commuting map $\alpha$  on Lie algebra $(\mathfrak{g},[,])$ such that 
		$$ \delta(x, y )=[\alpha(x),y], \quad \forall x,y \in \mathfrak{g}$$.
        \end{enumerate}
	\end{enumerate}
\end{proposition}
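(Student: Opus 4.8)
The plan is to prove the three parts in sequence, using the identification of biderivations with $\ad$-type maps built from commuting or anti-commuting maps, and exploiting the hypothesis $Der(\mathfrak{g})=ad(\mathfrak{g})$.

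For part (1), I would start from an anti-commuting map $\alpha$ and set $\delta(x,y):=[\alpha(x),y]$. Symmetry is immediate: $\delta(x,y)=[\alpha(x),y]=[\alpha(y),x]=-[x,\alpha(y)]$, so $\delta(y,x)=[\alpha(y),x]=[\alpha(x),y]=\delta(x,y)$. To check the first biderivation identity $\delta([x,y],z)=[x,\delta(y,z)]+[\delta(x,z),y]$, I would rewrite the right-hand side using symmetry as $[x,[\alpha(y),z]]+[[\alpha(x),z],y]$ and the anti-commuting relation to move $\alpha$ around, then apply the Jacobi identity; the key observation is that for fixed $z$ the map $x\mapsto[\alpha(x),z]=\delta(x,z)$ should behave like a derivation up to the Jacobi identity. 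The second identity follows from the first together with symmetry of $\delta$, by swapping the roles of the arguments. This part is routine and I expect no obstacle.

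For part (2)(a), given a skew-symmetric biderivation $\delta$, I would fix $z\in\mathfrak{g}$ and consider the linear map $d_z:x\mapsto\delta(x,z)$. The first biderivation identity says precisely that $d_z$ is a derivation of $\mathfrak{g}$, so by the hypothesis $Der(\mathfrak{g})=ad(\mathfrak{g})$ there is a unique (since $Z(\mathfrak{g})=0$ in the intended applications, but even without centre uniqueness holds modulo centre) element $\phi(z)\in\mathfrak{g}$ with $\delta(x,z)=[\phi(z),x]$ for all $x$. Linearity and well-definedness of $z\mapsto\phi(z)$ come from uniqueness of the $\ad$-preimage. Then I would plug this expression into the second biderivation identity $\delta(x,[y,z])=[y,\delta(x,z)]+[\delta(x,y),z]$ to get $[\phi([y,z]),x]=[y,[\phi(z),x]]+[[\phi(y),x],z]$, and after using Jacobi and the arbitrariness of $x$, deduce a relation forcing $\alpha:=-\phi$ (or $\phi$, depending on sign conventions) to be a commuting map; writing $\delta(x,y)=[\phi(y),x]=-[\alpha(y),x]$ and then using skew-symmetry $\delta(x,y)=-\delta(y,x)=[\alpha(x),y]$ gives the claimed form. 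The main subtlety here is pinning down the correct sign and verifying that the relation extracted from the second identity is exactly the commuting-map condition $[\alpha(x),y]=[x,\alpha(y)]$ rather than something weaker; this is the step I would be most careful about.

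For part (2)(b) the argument is parallel: fixing $z$ and using the first biderivation identity again yields $\delta(x,z)=[\phi(z),x]$ with $\phi$ linear, and now feeding this into the second identity together with the \emph{symmetry} $\delta(x,y)=\delta(y,x)$ produces the anti-commuting condition $[\alpha(x),y]=[\alpha(y),x]$ for the appropriate $\alpha$ built from $\phi$. I expect the whole proposition to hinge on the single clean idea that $x\mapsto\delta(x,z)$ is always a derivation, hence inner, and that the two biderivation axioms then split into "inner for each fixed slot" plus "a compatibility relation between the two slots" which is exactly the commuting or anti-commuting condition according to the symmetry type of $\delta$. The only real obstacle is bookkeeping: getting the signs and the left/right placement in $[\alpha(x),y]$ consistent throughout, since the paper writes $\delta(x,y)=[\alpha(x),y]$ in both (a) and (b) but the sign of $\alpha$ relative to $\phi$ differs between the skew and symmetric cases.
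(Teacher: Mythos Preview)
Your approach is correct and is exactly the standard argument one would give; the paper itself does not supply a proof at all, merely stating that ``the proof of the following proposition is obvious.'' So there is nothing to compare against beyond confirming that your outline is sound, which it is.

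One simplification worth pointing out for parts (2)(a) and (2)(b): once you have written $\delta(x,z)=[\phi(z),x]$ using $Der(\mathfrak{g})=\ad(\mathfrak{g})$, you do not need to invoke the second biderivation identity to extract the commuting or anti-commuting condition. It falls out directly from the (skew-)symmetry of $\delta$. Indeed, if $\delta$ is skew-symmetric then $[\phi(y),x]=\delta(x,y)=-\delta(y,x)=-[\phi(x),y]=[y,\phi(x)]$, which rearranges to $[\phi(x),y]=[x,\phi(y)]$, the commuting condition; setting $\alpha=-\phi$ then gives $\delta(x,y)=[\alpha(x),y]$. If $\delta$ is symmetric then $[\phi(y),x]=\delta(x,y)=\delta(y,x)=[\phi(x),y]$, which is already the anti-commuting condition, and $\alpha=\phi$ works with no sign change. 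This shortcut also resolves the sign bookkeeping you flagged as the main obstacle. Your remark that linearity of $\phi$ requires trivial centre (or at least a consistent choice modulo centre) is well taken; the paper applies the proposition only to semisimple Lie algebras, where $Z(\mathfrak{g})=0$ and the issue disappears.
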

\begin{proposition}\label{Acmap}
    Let  $(\mathfrak{g},[,], B)$ be a semisimple Lie algebra over a field of characteristic  $\neq 2$. Then, $ Acmap(\mathfrak{g})=\lbrace 0 \rbrace$ 
\end{proposition}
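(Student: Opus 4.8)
The plan is to transport the anti-commuting identity through the invariant nondegenerate symmetric form $B$ and thereby produce a trilinear form on $\mathfrak{g}$ that is at the same time symmetric in one pair of arguments and skew-symmetric in another; when $2$ is invertible in the ground field this forces the trilinear form to vanish, and nondegeneracy of $B$ together with the perfectness of a semisimple Lie algebra then yields $\alpha=0$.

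Concretely, let $\alpha\in\mathrm{Acmap}(\mathfrak{g})$ and set $F(x,y,z):=B([\alpha(x),y],z)$ for $x,y,z\in\mathfrak{g}$. The first step is to rewrite $F$ using invariance of $B$, namely $B([a,b],c)=B(a,[b,c])$, as $F(x,y,z)=B(\alpha(x),[y,z])$; since the bracket is skew-symmetric this gives immediately $F(x,z,y)=-F(x,y,z)$, so $F$ is skew-symmetric in its last two slots.

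The second step uses the hypothesis on $\alpha$: pairing the defining identity $[\alpha(x),y]=[\alpha(y),x]$ with an arbitrary $z$ against $B$ gives $B([\alpha(x),y],z)=B([\alpha(y),x],z)$, that is $F(x,y,z)=F(y,x,z)$, so $F$ is symmetric in its first two slots. Now combine the two symmetries: a short sign-chase with the transpositions $(1\,2)$ and $(2\,3)$ — equivalently, the $3$-cycle $(1\,2\,3)$ would act on $F$ by the sign $-1$ yet must act trivially since it has order $3$ — yields $F\equiv -F$, hence $2F\equiv 0$, and therefore $F\equiv 0$. This is the only place where the assumption $2\neq 0$ on the ground field is needed.

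Finally, $F\equiv 0$ reads $B(\alpha(x),[y,z])=0$ for all $x,y,z\in\mathfrak{g}$. Since a semisimple Lie algebra is perfect, the brackets $[y,z]$ span $\mathfrak{g}$, so $B(\alpha(x),\mathfrak{g})=\{0\}$ for every $x$; as $B$ is nondegenerate we obtain $\alpha(x)=0$ for all $x$, i.e. $\mathrm{Acmap}(\mathfrak{g})=\{0\}$. I do not expect a genuine obstacle here, since the argument is short; the only point demanding care is the permutation bookkeeping in the third step, where it is easy to land on a tautology instead of the sign reversal $2F=0$ if one is not careful about which arguments are being interchanged.
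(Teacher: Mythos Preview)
Your proof is correct and follows essentially the same route as the paper: both exploit invariance of the Killing form together with the anti-commuting identity to force a sign reversal $F=-F$, and then invoke nondegeneracy. The only cosmetic differences are that the paper writes the sign chase as an explicit chain of equalities rather than as symmetry properties of a trilinear form, and it finishes via $Z(\mathfrak{g})=0$ instead of perfectness---these two conclusions are dual to one another under the nondegenerate invariant form.
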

\begin{proof}
  Let $(\mathfrak{g},[,])$ be a semisimple Lie algebra and $B$ its Killing form. For any $\alpha \in Acmap(\mathfrak{g})$, we have  for any $x,y,z\in \mathfrak{g}$  
  \begin{eqnarray*}
      B( [x, \alpha(y)],z )&=&B( x,[\alpha( y),z] ) \\
      &=&B(x, [\alpha(z),y] )\\
      &=&B( [x,\alpha(z)], y )\\
      &=&B( [z,\alpha(x)], y )\\
      &=&B( z,[\alpha(x), y] )\\
      &=&B( z,[\alpha(y), x] )\\
      &=&-B( z,[x,\alpha(y)] )\\
    \end{eqnarray*}
    Then, $2B [x, \alpha(y)],z )=0$, for all $x,y,z\in \mathfrak{g}$. Since, $B$ is nondegenerated then $[x,\alpha(y)]=0$ for all $x,y\in \mathfrak{g}$. Therfore, $\alpha(y)\in Z(\mathfrak{g})=\lbrace 0 \rbrace$. Which ends the proof.
\end{proof}
\begin{theorem}
    Let $(\mathfrak{g},[,])$ be a semisimple Lie algebra over a fiel of characteristic $\neq2$. Then any symmetric biderivation of $\mathfrak{g}$ is the zero mapping.
\end{theorem}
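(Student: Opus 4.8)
The plan is to combine Proposition~\ref{hom} with Proposition~\ref{Acmap}. A semisimple Lie algebra $\mathfrak{g}$ over a field of characteristic $\neq 2$ satisfies $\mathrm{Der}(\mathfrak{g})=\mathrm{ad}(\mathfrak{g})$ (Whitehead's lemma, or the standard fact that all derivations of a semisimple Lie algebra are inner), so the hypothesis of part~(2) of Proposition~\ref{hom} holds. Hence, given any symmetric biderivation $\delta$ of $\mathfrak{g}$, part~(2)(b) of that proposition produces an anti-commuting map $\alpha\in\mathrm{Acmap}(\mathfrak{g})$ with $\delta(x,y)=[\alpha(x),y]$ for all $x,y\in\mathfrak{g}$.

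Now apply Proposition~\ref{Acmap}: over a field of characteristic $\neq 2$, a semisimple Lie algebra has $\mathrm{Acmap}(\mathfrak{g})=\{0\}$, so $\alpha=0$. Therefore $\delta(x,y)=[\alpha(x),y]=[0,y]=0$ for all $x,y\in\mathfrak{g}$, i.e. $\delta$ is the zero mapping.

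There is essentially no obstacle here, since the two auxiliary propositions do all the work; the only point requiring care is the appeal to $\mathrm{Der}(\mathfrak{g})=\mathrm{ad}(\mathfrak{g})$, which should be cited as a classical result on semisimple Lie algebras (valid in characteristic zero, or more generally whenever the Killing form is nondegenerate and Whitehead's lemma applies). One might alternatively give a direct argument bypassing Proposition~\ref{hom}: for a symmetric biderivation $\delta$ one shows, using the two biderivation identities together with the invariance of the Killing form $B$, that $B(\delta(x,y),z)$ is totally symmetric in a way that forces $2B(\delta(x,y),z)=0$, exactly as in the proof of Proposition~\ref{Acmap}; nondegeneracy of $B$ then gives $\delta\equiv 0$. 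But the quickest route is the two-line deduction above, so that is what I would write.
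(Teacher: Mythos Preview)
Your proof is correct and follows exactly the same route as the paper: apply Proposition~\ref{hom}(2)(b) to write the symmetric biderivation as $\delta(x,y)=[\alpha(x),y]$ for some $\alpha\in\mathrm{Acmap}(\mathfrak{g})$, then invoke Proposition~\ref{Acmap} to conclude $\delta=0$. You are in fact slightly more careful than the paper in explicitly noting that the hypothesis $\mathrm{Der}(\mathfrak{g})=\mathrm{ad}(\mathfrak{g})$ of Proposition~\ref{hom}(2) is satisfied for semisimple $\mathfrak{g}$.
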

\begin{proof}
   Let $\delta$ be a symmetric biderivation of a semisimple Lie algebra $\mathfrak{g}$, then by Proposition \ref{hom}, there exists $\alpha\in Acmap(\mathfrak{g})$ such that $\delta(x,y)=[\alpha(x),y]$ for all $x,y \in \mathfrak{g}$. Thinks, to Proposition \ref{Acmap}, we have $[\alpha(x),y]=0$, for all $x,y \in \mathfrak{g}$. Therefore, $\delta(x,y)=0$ for all $x,y \in \mathfrak{g}$. 
\end{proof}
\begin{lemma}\label{l1}
    Let $\mathfrak{s}=\mathfrak{s}_1\oplus \dots \oplus \mathfrak{s}_k$ be the decomposition into simple factors of a semisimple Lie algebra and let $f$ be a map in the centroid of $\mathfrak{s}$. There exist $\lambda_1, \dots, \lambda_k \in \mathbb{C}$ such that, if $x=\sum_{i=1}^kx_j$ with $x_j \in \mathfrak{s}_j$, then $f(x)=\sum_{i=1}^k \lambda \lambda_i x_i$.
\end{lemma}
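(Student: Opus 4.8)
The plan is to reduce the statement to each simple factor and then apply Schur's lemma. Concretely, I would first show that $f$ stabilises every simple ideal $\mathfrak{s}_i$, then observe that the restriction $f_i:=f|_{\mathfrak{s}_i}$ lies in the centroid of the simple Lie algebra $\mathfrak{s}_i$, and finally invoke Schur's lemma (this is where the hypothesis that the ground field is $\mathbb{C}$, hence algebraically closed, is essential) to conclude that $f_i$ is a scalar multiple of the identity.

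\textbf{Step 1: $f(\mathfrak{s}_i)\subseteq\mathfrak{s}_i$.} Each $\mathfrak{s}_i$ is simple, hence perfect, so $\mathfrak{s}_i=[\mathfrak{s}_i,\mathfrak{s}_i]$. For $a,b\in\mathfrak{s}_i$ the defining identity of the centroid gives $f([a,b])=[f(a),b]$, and since $\mathfrak{s}_i$ is an ideal of $\mathfrak{s}$ and $b\in\mathfrak{s}_i$, we have $[f(a),b]\in\mathfrak{s}_i$. Since every element of $\mathfrak{s}_i$ is a finite sum of brackets $[a,b]$ with $a,b\in\mathfrak{s}_i$, linearity yields $f(\mathfrak{s}_i)\subseteq\mathfrak{s}_i$. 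Consequently $f$ respects the decomposition $\mathfrak{s}=\mathfrak{s}_1\oplus\dots\oplus\mathfrak{s}_k$, and $f_i:=f|_{\mathfrak{s}_i}$ is a well-defined linear endomorphism of $\mathfrak{s}_i$ satisfying $f_i([a,b])=[f_i(a),b]$ for all $a,b\in\mathfrak{s}_i$; that is, $f_i\in\mathrm{Cent}(\mathfrak{s}_i)$.

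\textbf{Step 2: Schur's lemma on each factor.} Viewing $\mathfrak{s}_i$ as a module over itself via the adjoint action, the condition $f_i([a,b])=[f_i(a),b]=[a,f_i(b)]$ says exactly that $f_i$ is a homomorphism of $\mathfrak{s}_i$-modules, i.e. $f_i(\mathrm{ad}(a)b)=\mathrm{ad}(a)f_i(b)$ for all $a,b$. Since $\mathfrak{s}_i$ is simple, its submodules are precisely its ideals, so the adjoint module is irreducible; as the field is $\mathbb{C}$, Schur's lemma gives $\mathrm{End}_{\mathfrak{s}_i}(\mathfrak{s}_i)=\mathbb{C}\,\mathrm{id}_{\mathfrak{s}_i}$. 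Hence there exists $\lambda_i\in\mathbb{C}$ with $f_i=\lambda_i\,\mathrm{id}_{\mathfrak{s}_i}$.

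\textbf{Conclusion.} For $x=\sum_{i=1}^k x_i$ with $x_i\in\mathfrak{s}_i$, Step 1 gives $f(x)=\sum_{i=1}^k f(x_i)=\sum_{i=1}^k f_i(x_i)=\sum_{i=1}^k \lambda_i x_i$, which is the desired formula. The computations here are entirely routine; the only substantive input is the use of Schur's lemma over an algebraically closed field in Step 2, so the main (mild) obstacle is simply recalling that the centroid of a simple Lie algebra over $\mathbb{C}$ consists of scalars — everything else is formal bookkeeping with ideals.
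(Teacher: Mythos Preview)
Your proof is correct and follows essentially the same strategy as the paper: first show that $f$ preserves each simple factor, then use that the centroid of a simple Lie algebra over $\mathbb{C}$ reduces to scalars. The only cosmetic differences are that in Step~1 you invoke perfectness of $\mathfrak{s}_i$ (the paper instead uses that cross-brackets vanish together with centerlessness of each $\mathfrak{s}_i$), and in Step~2 you cite Schur's lemma by name whereas the paper spells out the eigenspace-is-an-ideal argument directly.
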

\begin{proof}
    First notice that the result holds for $k=1$ because if $\mathfrak{s}$ is simple and $\lambda \in \mathbb{C}$ is an eigenvalue of $f$ then one has that $Ker(D-\lambda I)\neq \lbrace 0 \rbrace$ is an ideal of $\mathfrak{s}$ because $f(x)=\lambda x$ implies $f([x,y])=[f(x),y]=[\lambda x,y]$ and, therefore, it must be the whole $\mathfrak{s}$.\\
    Now, if $k \geq 2$, take $x_j \in \mathfrak{s}_j$ for some $j \leq k$. For all $i \leq k$ and $x \in \mathfrak{s}_i$, one has $0=f([x_i,x_j])=[x_i,f(x_j)]$, which clearly implies $f(x_j)\in \mathfrak{s}_j$ and the restriction $f_j=f_{\vert_{\mathfrak{s}_j}}$ is in the centroid of $\mathfrak{s}_j$ and, hence, there exists $\lambda_j\in \mathbb{C}$ such that $f_j=\lambda_j I$. 
\end{proof}
\begin{lemma}
\label{comcent}
Let $(\mathfrak{g},[\,,\,])$ be a semisimole Lie algebra. Then $\mbox{Cmap}(\mathfrak{g})= \mbox{Cent}(\mathfrak{g})$.
 \end{lemma}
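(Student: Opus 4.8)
The plan is to prove the two inclusions $\mbox{Cent}(\g)\subseteq\mbox{Cmap}(\g)$ and $\mbox{Cmap}(\g)\subseteq\mbox{Cent}(\g)$ separately. The first holds over any Lie algebra: if $\alpha\in\mbox{Cent}(\g)$, then, using skew-symmetry of the bracket, $[\alpha(x),y]=\alpha([x,y])=-\alpha([y,x])=-[\alpha(y),x]=[x,\alpha(y)]$ for all $x,y$, so $\alpha\in\mbox{Cmap}(\g)$. Hence all the work is in the reverse inclusion, for which semisimplicity is needed.

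First I would reduce to the simple case. Write $\g=\mathfrak{s}_1\oplus\dots\oplus\mathfrak{s}_k$ as in Lemma~\ref{l1} and take $\alpha\in\mbox{Cmap}(\g)$. For $x\in\mathfrak{s}_i$ and $y\in\mathfrak{s}_j$ with $i\neq j$, since $[\mathfrak{s}_l,\mathfrak{s}_j]=0$ for $l\neq j$, the identity $[\alpha(x),y]=[x,\alpha(y)]$ equates an element of $\mathfrak{s}_j$ (the bracket of the $\mathfrak{s}_j$-component of $\alpha(x)$ with $y$) to an element of $\mathfrak{s}_i$; hence both sides vanish, and as $Z(\mathfrak{s}_j)=\{0\}$ the $\mathfrak{s}_j$-component of $\alpha(x)$ is $0$. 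Thus each $\mathfrak{s}_i$ is $\alpha$-invariant and $\alpha|_{\mathfrak{s}_i}\in\mbox{Cmap}(\mathfrak{s}_i)$. Once we know that a commuting map on a simple Lie algebra is a scalar multiple of the identity, we get $\alpha|_{\mathfrak{s}_i}=\lambda_i\,\mathrm{id}$; a map acting by a scalar on each simple ideal lies in $\mbox{Cent}(\g)$ (this is precisely the description of the centroid behind Lemma~\ref{l1}), which closes the argument.

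It then remains to treat a simple Lie algebra $\mathfrak{s}$ and $\alpha\in\mbox{Cmap}(\mathfrak{s})$; putting $y=x$ gives $[\alpha(x),x]=0$ for all $x$, and polarizing gives back $[\alpha(x),y]+[\alpha(y),x]=0$. Fix a Cartan subalgebra $\h$ with root decomposition $\mathfrak{s}=\h\oplus\bigoplus_{\gamma\in\Phi}\mathfrak{s}_\gamma$. For a regular $h\in\h$ one has $\ker\ad(h)=\h$, so $[\alpha(h),h]=0$ forces $\alpha(h)\in\h$, and by density of regular elements $\alpha(\h)\subseteq\h$. Polarizing at a pair $(h,e_\gamma)$ gives $[h,\alpha(e_\gamma)]=[\alpha(h),e_\gamma]=\gamma(\alpha(h))\,e_\gamma$; decomposing $\alpha(e_\gamma)$ along $\h$ and the root spaces and comparing $\ad(h)$-eigenvalues shows $\alpha(e_\gamma)=h_\gamma+\lambda_\gamma e_\gamma$ with $h_\gamma\in\h$, together with the equality of functionals $\gamma\circ\alpha|_\h=\lambda_\gamma\gamma$. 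Since the roots span $\h^{*}$ and the Dynkin diagram of a simple Lie algebra is connected, all $\lambda_\gamma$ coincide with a single scalar $\lambda$ and $\alpha|_\h=\lambda\,\mathrm{id}$. Finally, polarizing at a pair $(e_\gamma,e_\delta)$ of root vectors makes the $\lambda$-terms cancel and leaves $\delta(h_\gamma)=0$ for every root $\delta$ (the case $\delta=\gamma$ coming from $[\alpha(e_\gamma),e_\gamma]=0$); as the roots span $\h^{*}$ this forces $h_\gamma=0$, whence $\alpha=\lambda\,\mathrm{id}\in\mbox{Cent}(\mathfrak{s})$.

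The hard part is the simple case. The equality $\mbox{Cmap}=\mbox{Cent}$ fails for general perfect, centerless Lie algebras, so no amount of Jacobi-identity or Killing-form bookkeeping alone can close the gap: such manipulations only re-derive the defining identity $[\alpha(x),y]=[x,\alpha(y)]$. The genuinely new ingredient is the (near-)irreducibility of the adjoint representation — Schur's lemma on each simple factor — which is exactly what the root-space computation above encodes; alternatively one may cite the known description of commuting linear maps on simple Lie algebras \cite{brz}.
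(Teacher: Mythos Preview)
Your argument is correct, but it takes a genuinely different route from the paper's. The paper proves only the nontrivial inclusion $\mbox{Cmap}(\g)\subseteq\mbox{Cent}(\g)$ by a one-line reduction: given $f\in\mbox{Cmap}(\g)$, the bilinear map $\delta(x,y)=[f(x),y]$ is a skew-symmetric biderivation, and then an external result (the known description of skew-symmetric biderivations of semisimple Lie algebras) furnishes $\gamma\in\mbox{Cent}(\g)$ with $[f(x),y]=[\gamma(x),y]$ for all $x,y$; centerlessness forces $f=\gamma$. In other words, the paper passes through the biderivation classification rather than working directly with the commuting-map identity.

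Your approach is self-contained: you reduce to simple factors by a direct invariance argument, and then analyse a commuting map on a simple algebra via the root-space decomposition, ultimately extracting that $\alpha$ is a scalar on each factor. This avoids any appeal to the biderivation literature and makes the role of simplicity (irreducibility of the adjoint representation) completely explicit; the cost is a longer computation. The paper's route is shorter but leans on a cited theorem that, within the paper's own logical flow, sits very close to the statement being proved (the subsequent theorem on skew-biderivations is derived \emph{from} this lemma), so your direct argument is arguably cleaner from a dependency standpoint. Either way, both proofs are valid; your final remark that one may alternatively cite \cite{brz} is essentially what the paper does.
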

 \begin{proof}
  Let $f \in \mbox{Cmap}(\mathfrak{g}).$ Then,  the bilinear map $\delta: \mathfrak{g}\times \mathfrak{g}\rightarrow \mathfrak{g},  $ defined by $\delta(x,y)= [f(x),y]),  \forall x, y \in \mathfrak{g},$ is a skew-symmetric biderivation of $(\mathfrak{g},[\,,\,])$. It follows, by Theorem 3.8 of \cite{TMC}, that there exists $\gamma\in \mbox{Cent}(\mathfrak{g})$ such that $[f(x),y]= \gamma([x,y])= [\gamma(x),y],\, \forall x, y \in \mathfrak{g}.$ Therefore $f= \gamma$ because $(\mathfrak{g},[\,,\,])$ is a centerless Lie algebra.
 \end{proof}
 \begin{theorem}
     Let $\mathfrak{s}=\mathfrak{s}_1\oplus \dots \oplus \mathfrak{s}_k$ be the decomposition into simple factors of a semisimple Lie algebra and let $\delta$ be a skew-biderivation of $\mathfrak{s}$. There exist $\lambda_1, \dots, \lambda_k \in \mathbb{C}$ such that, 
     $$\delta(x,y)=\sum_{i=1}^k \lambda_i[x_i,y_i],$$
     for all $x=\sum_{i=1}^kx_i, y=\sum_{i=1}^ky_i \in \mathfrak{s}$.
 \end{theorem}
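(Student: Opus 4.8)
The plan is to assemble the three results that immediately precede the statement. First, recall that a finite-dimensional semisimple Lie algebra $\mathfrak{s}$ satisfies $Der(\mathfrak{s})=ad(\mathfrak{s})$ (a standard consequence of Whitehead's lemma), so the converse part of Proposition~\ref{hom}, namely item (2)(a), applies to $\mathfrak{s}$. Hence, given a skew-symmetric biderivation $\delta$ of $\mathfrak{s}$, there exists a commuting map $\alpha\in Cmap(\mathfrak{s})$ with $\delta(x,y)=[\alpha(x),y]$ for all $x,y\in\mathfrak{s}$.

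Next, by Lemma~\ref{comcent} we have $Cmap(\mathfrak{s})=Cent(\mathfrak{s})$, so $\alpha$ lies in the centroid of $\mathfrak{s}$. Then Lemma~\ref{l1} supplies scalars $\lambda_1,\dots,\lambda_k\in\mathbb{C}$ such that, writing $x=\sum_{i=1}^k x_i$ with $x_i\in\mathfrak{s}_i$, one has $\alpha(x)=\sum_{i=1}^k\lambda_i x_i$. Substituting this into the expression for $\delta$ and using that $[\mathfrak{s}_i,\mathfrak{s}_j]=0$ for $i\neq j$ (the $\mathfrak{s}_i$ being ideals in a direct sum), we get, for $x=\sum_i x_i$ and $y=\sum_j y_j$,
$$\delta(x,y)=[\alpha(x),y]=\Big[\sum_{i=1}^k\lambda_i x_i,\ \sum_{j=1}^k y_j\Big]=\sum_{i,j}\lambda_i[x_i,y_j]=\sum_{i=1}^k\lambda_i[x_i,y_i],$$
which is exactly the asserted formula.

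There is essentially no hard computation here; the whole content sits in the three cited lemmas/propositions, and the only points requiring care are the invocation of $Der(\mathfrak{s})=ad(\mathfrak{s})$ (needed so that Proposition~\ref{hom}(2) is applicable) and the fact that $\mathfrak{s}$ is centerless (needed for Lemma~\ref{comcent} and so that the map $\alpha$ with $\delta(x,y)=[\alpha(x),y]$ is genuinely obtained). If desired, one can also record the converse: for any $\lambda_1,\dots,\lambda_k\in\mathbb{C}$ the bilinear map $\delta(x,y)=\sum_i\lambda_i[x_i,y_i]$ equals $[\alpha(x),y]$ with $\alpha=\sum_i\lambda_i\,\mathrm{id}_{\mathfrak{s}_i}\in Cent(\mathfrak{s})=Cmap(\mathfrak{s})$, and a direct check with the Jacobi identity (as in the proof of Proposition~\ref{hom}(1)) shows this $\delta$ is a skew-symmetric biderivation, so the description is exhaustive.
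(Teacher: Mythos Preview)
Your proof is correct and follows essentially the same route as the paper: invoke Proposition~\ref{hom}(2)(a) to obtain a commuting map $\alpha$ with $\delta(x,y)=[\alpha(x),y]$, apply Lemma~\ref{comcent} to place $\alpha$ in the centroid, and then use Lemma~\ref{l1} to extract the scalars $\lambda_i$. Your version is in fact more explicit than the paper's, spelling out why $Der(\mathfrak{s})=ad(\mathfrak{s})$ is needed and why the cross terms $[x_i,y_j]$ vanish for $i\neq j$.
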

 \begin{proof}
     Let $\delta$ be a skew-symmetric biderivation of $\mathfrak{s}=\mathfrak{s}_1\oplus \dots \oplus \mathfrak{s}_k$, then by Proposition \ref{hom} there exist a commuting linear map $f$ such that $\delta(x,y)=[f(x),y]$ for any $x,y \in \mathfrak{s}$. Therefore, by Lemma\ref{l1} and Lemma \ref{comcent} there exist $\lambda_1,\dots,\lambda_k$ such that
     $$\delta(x,y)=\sum_{i=1}^k \lambda_i[x_i,y_i].$$
 \end{proof}
 \begin{theorem} \label{simple}
     Let $\mathfrak{g}$ be a simple Lie algebra. Then, $\delta$ is a biderivation of $\mathfrak{g}$ if and only if there exists a scalar $\lambda$ such that 
     $$\delta(x,y)=\lambda[x,y], \quad \forall x,y \in \mathfrak{g}.$$
 \end{theorem}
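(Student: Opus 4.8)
The plan is to reduce this statement to the two structure theorems already proved above for semisimple Lie algebras, specialised to a single simple factor. The backward implication I would dispatch by a direct check: if $\delta(x,y)=\lambda[x,y]$, then the two biderivation identities become $\lambda[[x,y],z]=\lambda[x,[y,z]]+\lambda[[x,z],y]$ and $\lambda[x,[y,z]]=\lambda[y,[x,z]]+\lambda[[x,y],z]$, both of which are the Jacobi identity after cancelling $\lambda$ and using the skew-symmetry of the bracket; hence $\lambda[\,,\,]\in BDer(\mathfrak{g})$.

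For the forward implication, let $\delta\in BDer(\mathfrak{g})$ be arbitrary. I would first verify that the transposed map $\delta^{T}(x,y):=\delta(y,x)$ is again a biderivation: the first biderivation identity for $\delta^{T}$ is precisely the second identity for $\delta$ with the arguments renamed (using also the skew-symmetry of $[\,,\,]$), and symmetrically for the other identity. Since $\mathfrak{g}$ is a Lie algebra over $\mathbb{C}$, hence of characteristic $\neq 2$, and $BDer(\mathfrak{g})$ is a vector space, the symmetric and skew-symmetric parts $\delta_{\pm}:=\tfrac12(\delta\pm\delta^{T})$ satisfy $\delta_{+}\in BDer_{+}(\mathfrak{g})$, $\delta_{-}\in BDer_{-}(\mathfrak{g})$, and $\delta=\delta_{+}+\delta_{-}$.

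Then I would invoke the earlier results. A simple Lie algebra is semisimple and satisfies $Der(\mathfrak{g})=ad(\mathfrak{g})$ (Whitehead's first lemma), so Proposition \ref{hom} and the structure theorems derived from it apply. The theorem on symmetric biderivations of semisimple Lie algebras gives $\delta_{+}=0$. The theorem on skew-symmetric biderivations, applied with $k=1$ (that is, with $\mathfrak{s}=\mathfrak{s}_{1}=\mathfrak{g}$), yields a scalar $\lambda\in\mathbb{C}$ with $\delta_{-}(x,y)=\lambda[x,y]$. Combining, $\delta=\delta_{-}$ and $\delta(x,y)=\lambda[x,y]$ for all $x,y\in\mathfrak{g}$.

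The one point that needs genuine care, and which I regard as the main (if modest) obstacle, is the verification that passing to the symmetric and skew-symmetric components preserves the biderivation axioms; everything else is an assembly of the theorems above with the Jacobi identity. Note that one cannot circumvent this by applying Proposition \ref{hom}(2) directly to $\delta$, since that proposition is stated separately for skew-symmetric and for symmetric biderivations, so the decomposition $BDer(\mathfrak{g})=BDer_{+}(\mathfrak{g})\oplus BDer_{-}(\mathfrak{g})$ is really needed.
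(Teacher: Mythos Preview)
Your proposal is correct and matches the paper's approach: the paper states this theorem without an explicit proof, treating it as an immediate corollary of the preceding two theorems on symmetric and skew-symmetric biderivations of semisimple Lie algebras (the latter applied with $k=1$), together with the decomposition $BDer(\mathfrak{g})=BDer_{+}(\mathfrak{g})\oplus BDer_{-}(\mathfrak{g})$. Your write-up simply fills in the details the paper leaves implicit.
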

\section{ $\omega$-Lie algebras}
\begin{definition}
A vector space over $\mathbb{C}$ is called an $\omega$-Lie algebra if there is a bilinear map $[,]: \mathfrak{g} \times \mathfrak{g} \rightarrow \mathfrak{g}$ and a skew-symmetric bilinear form $\omega: \mathfrak{g} \times \mathfrak{g} \rightarrow \mathbb{C}$ such that 
\begin{enumerate}
\item $[x,y]=-[y,x]$,
\item $[[x,y],z]+[[y,z],x]+[[z,x],y]=\omega(x,y)z+\omega(y,z)x+\omega(z,x)y$,
\end{enumerate}
for all $x,y,z \in \mathfrak{g}$.
\end{definition}
Clearly, an $\omega$-Lie algebra $\mathfrak{g}$ with $\omega=0$ is a Lie algebra, which is called a trivial $\omega$-Lie algebra. Otherwise, $\mathfrak{g}$ is called a nontrivial $\omega$-Lie algebra
\begin{definition}
Let $(\mathfrak{g},[,])$ be an $\omega$-Lie algebra. A linear map $D : \mathfrak{g} \rightarrow \mathfrak{g}$ is called derivation, if 
$$D([x,y])=[D(x),y]+[x,D(y)]$$
for all $x,y \in \mathfrak{g}$.
\end{definition}
We write $gl(\mathfrak{g})$ for the general linear Lie algebra on $\mathfrak{g}$. Then the set $Der(\mathfrak{g})$ of all derivations of $\mathfrak{g}$ forms a Lie subalgebra of $gl(\mathfrak{g})$, which is called the derivation algebra of $\mathfrak{g}$.
\begin{definition}
Let $(\mathfrak{g},[,])$ be an $\omega$-Lie algebra. A derivation $D : \mathfrak{g} \rightarrow \mathfrak{g}$ of $\mathfrak{g}$ is called $\omega$-derivation, if 
$$\omega(D(x),y)+\omega(x,D(y))=0$$
for all $x,y \in \mathcal{A}$.
\end{definition}
We write $Der_\omega(\mathfrak{g})$ for the set consisting of all $\omega$-derivations of $\mathfrak{g}$. Clearly,$Der_\omega(\mathfrak{g}) \subseteq Der(\mathfrak{g})$.
\begin{definition}
A linear map $\Delta: \mathfrak{g} \rightarrow \mathfrak{g}$ is said to be a local derivation if for any $x \in \mathfrak{g}$ there exists a derivation $D_x$ of $\mathfrak{g}$ such that $\Delta(x)=D_x(x)$.
\end{definition}
\begin{definition}
A (not necessary linear) map $\Delta: \mathfrak{g} \rightarrow \mathfrak{g}$ is said to be a $2$- local derivation if for any $x,y \in \mathfrak{g}$ there exists a derivation $D_{x,y}$ of $\mathfrak{g}$ such that $$\Delta(x)=D_{x,y}(x) \quad and \quad \Delta(y)=D_{x,y}(y).$$
\end{definition}

Let us initiate the categorization of 3-dimebsional and 4-dimensional nontrivial $\omega$-Lie algebras over $\mathbb{C}$.
\begin{theorem}(Chen-Liu-Zhang \cite{Chen1}). 
Let $\mathfrak{g}$ be a 3-dimensional nontrivial $\omega$-Lie algebra over $\mathbb{C}$, then it must be isomorphic to one of the following algebras:

(1) $L_1$: $[e_1,e_3] = 0$, $[e_2,e_3] = e_3$, $[e_1, e_2] = e_2$ and $\omega(e_2,e_3) = \omega(e_1,e_3) = 0$, $\omega(e_1, e_2) = 1$.

(2) $L_2$: $[e_1, e_2] = 0$, $[e_1,e_3] = e_2$, $[e_2,e_3] = e_3$ and $\omega(e_1, e_2) = 0$, $\omega(e_1,e_3) = 1$, $\omega(e_2,e_3) = 0$.

(3) $A_\alpha$: $[e_1, e_2] = e_1$, $[e_1,e_3] = e_1 + e_2$, $[e_2,e_3] = e_3 + \alpha e_1$ and $\omega(e_1, e_2) = \omega(e_1,e_3) = 0$, $\omega(e_2,e_3) = -1$, where $\alpha \in \mathbb{C}$.

(4) $B$: $[e_1, e_2] = e_2$, $[e_1,e_3] = e_2 + e_3$, $[e_2,e_3] = e_1$ and $\omega(e_1, e_2) = \omega(e_1,e_3) = 0$, $\omega(e_2,e_3) = 2$.

(5) $C_\alpha$: $[e_1, e_2] = e_2$, $[e_1,e_3] = \alpha e_3$, $[e_2,e_3] = e_1$ and $\omega(e_1, e_2) = \omega(e_1,e_3) = 0$, $\omega(e_2,e_3) = 1 + \alpha$, where $0, -1, \alpha \in \mathbb{C}$.
\end{theorem}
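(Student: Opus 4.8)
The plan is to classify every nontrivial structure by combining two cheap structural reductions with an explicit normalization of the data $([\,,\,],\omega)$ up to the change-of-basis action of $\mathrm{GL}(3,\mathbb{C})$. First I would record two preliminary facts. Since $\omega$ is a skew-symmetric bilinear form on the $3$-dimensional space $\mathfrak{g}$ and nontriviality means $\omega\neq 0$, and since an alternating form always has even rank, $\omega$ must have rank exactly $2$; hence its radical $\mathrm{rad}(\omega)=\{v\in\mathfrak{g}:\omega(v,\cdot)=0\}$ is one-dimensional. Second, the $\omega$-Jacobi expression
\[
J(x,y,z)=[[x,y],z]+[[y,z],x]+[[z,x],y]-\omega(x,y)z-\omega(y,z)x-\omega(z,x)y
\]
is totally antisymmetric and trilinear in $(x,y,z)$ (both the Jacobiator and the $\omega$-term flip sign under a transposition), so on a $3$-dimensional space it is determined by its single value $J(e_1,e_2,e_3)$. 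Thus axiom (2) is equivalent to the one vector equation $J(e_1,e_2,e_3)=0$, that is, three scalar equations in the structure constants.

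Next I would dispose of the degenerate case: if $[\mathfrak{g},\mathfrak{g}]=0$ then $J(e_1,e_2,e_3)=0$ reads $\omega(e_1,e_2)e_3+\omega(e_2,e_3)e_1+\omega(e_3,e_1)e_2=0$, which forces $\omega=0$, contradicting nontriviality. So $[\mathfrak{g},\mathfrak{g}]\neq 0$, and the classification reduces to enumerating the admissible brackets. To organize this, I would first use the freedom to choose a basis adapted to $\omega$: pick a vector spanning $\mathrm{rad}(\omega)$ and complete it to a basis in which $\omega$ takes a normalized shape, retaining the residual change-of-basis group (the stabilizer of this shape together with the available rescalings) with which to further simplify the nine structure constants $[e_i,e_j]=\sum_k c_{ij}^k e_k$.

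The core of the argument is then a finite case analysis, carried out according to the invariant $\dim[\mathfrak{g},\mathfrak{g}]\in\{1,2,3\}$ and the position of $\mathrm{rad}(\omega)$ relative to $[\mathfrak{g},\mathfrak{g}]$ and to the kernels and images of the adjoint operators $\mathrm{ad}(e_i)$. In each case I would substitute the chosen normal form into the three scalar equations coming from $J(e_1,e_2,e_3)=0$, solve for the remaining structure constants, and then spend the residual basis freedom to clear parameters; the surviving normal forms should be exactly $L_1,L_2,A_\alpha,B,C_\alpha$. Finally I would check non-redundancy: verify directly that each listed pair $([\,,\,],\omega)$ satisfies (1)--(2) and is nontrivial, and separate the isomorphism classes using basis-independent invariants, namely $\dim[\mathfrak{g},\mathfrak{g}]$, the Jordan/eigenvalue data of the adjoint operators, and the rank of $\omega$ restricted to $[\mathfrak{g},\mathfrak{g}]$. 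These invariants in particular pin down when two parameter values yield isomorphic algebras and explain the exclusions $\alpha\neq 0,-1$ in $C_\alpha$.

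The main obstacle I anticipate is the case-analysis step: making the reduction genuinely exhaustive. It is easy to produce the five families, but proving that every nontrivial structure is $\mathrm{GL}(3,\mathbb{C})$-equivalent to one of them --- with no case overlooked and no double counting --- requires disciplined bookkeeping of the residual basis freedom left after normalizing $\omega$, together with a careful treatment of the borderline subcases (for instance when a relevant adjoint operator fails to be diagonalizable, or when $\mathrm{rad}(\omega)$ meets $[\mathfrak{g},\mathfrak{g}]$ nontrivially). Establishing the pairwise non-isomorphism within and across the parametric families is the other delicate point, which I would settle using the invariants listed above.
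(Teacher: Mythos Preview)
The paper does not prove this statement. It is quoted verbatim as a known classification result due to Chen--Liu--Zhang and attributed to the reference \cite{Chen1}; no argument, not even a sketch, is given in the present paper. There is therefore nothing here to compare your proposal against.

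For what it is worth, your outline is a sensible blueprint for such a classification: normalize the rank-$2$ form $\omega$ first, reduce the $\omega$-Jacobi identity to the single vector equation $J(e_1,e_2,e_3)=0$, then split into cases by $\dim[\mathfrak{g},\mathfrak{g}]$ and the position of $\mathrm{rad}(\omega)$, and finally separate isomorphism classes by invariants of the adjoint operators. You are right that the delicate part is the exhaustive bookkeeping of the residual $\mathrm{GL}(3,\mathbb{C})$-freedom after normalizing $\omega$, and that the pairwise non-isomorphism (including the parameter restrictions on $C_\alpha$) must be argued separately. If you want to check your reduction against a published argument, you would need to consult \cite{Chen1} (or Nurowski's original real classification in \cite{nur1}), not the present paper.
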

\begin{theorem}(Chen-Zhang \cite{Chen0}). Any $4$-dimensional nontrivial $\omega$-Lie algebra over $\mathbb{C}$ must be isomorphic to one of the following algebras:\\
$\bullet\,L_{1,1}:\,   [e_1,e_2]=e_2,  [e_2,e_3]=e_3, [e_4,e_2]=-y,  \omega(e_1,e_2)=1. $\\
$\bullet\,L_{1,2} :\, [e_1,e_2]=e_2, [e_2,e_3]=e_3, [e_4,e_1]=e_3, [e_4,e_2]=-e_4, \omega(e_1,e_2)=1.$\\
$\bullet\,L_{1,3} :\, [e_1,e_2]=e_2, [e_2,e_3]=e_3, [e_4,e_1]=e_2, [e_4,e_2]=-e_4, \omega(e_1,e_2)=\omega(e_4,e_1)=1.$\\
$\bullet\,L_{1,4} :\, [e_1,e_2]=e_2, [e_2,e_3]=e_3, [e_4,e_1]=e_2+e_3,[e_4,e_2]=-e_4,  \omega(e_1,e_2)=\omega(e_4,e_1)=1.$\\
$\bullet\,L_{1,5} :\, [e_1,e_2]=e_2, [e_2,e_3]=e_3, [e_4,e_1]=e_4, [e_4,e_2]=-e_4, \omega(e_1,e_2)=1. $\\
$\bullet\,L_{1,6}:\,[e_1,e_2]=e_2,\, [e_2,e_3]=e_3, [e_4,e_1]=e_4+e_2, [e_4,e_2]=-e_4, \omega(e_1,e_2)=\omega(e_4,e_1)=1. $\\
$\bullet\,L_{1,7} :\, [e_1,e_2]=e_2, [e_2,e_3]=e_3,  [e_4,e_1]=e_4,  [e_4,e_2]=e_3-e_4, \omega(e_1,e_2)=1.$\\
$\bullet\,L_{1,8} :\,[e_1,e_2]=e_2, [e_2,e_3]=e_3, [e_4,e_1]=e_4+e_2, [e_4,e_2]=e_3-e_4, \omega(e_1,e_2)=\omega(e_4,e_1)=1. $\\
$\bullet\,L_{2,1} :\, [e_1,e_3]=e_2, [e_2,e_3]=e_3, [e_4,e_2]=-e_4, \omega(e_1,e_3)=1.$\\
$\bullet\,L_{2,2} :\, [e_1,e_3]=e_2, [e_2,e_3]=e_3, [e_4,e_2]=-e_4, [e_4,e_1]=e_3,  \omega(e_1,e_3)=1. $\\
$\bullet\,L_{2,3}:\, [e_1,e_3]=e_2, [e_2,e_3]=e_3, [e_4,e_2]=-e_4, [e_4,e_1]=e_4,  \omega(e_1,e_3)=1. $\\
$\bullet\,L_{2,4} :\, [e_1,e_3]=e_2, [e_2,e_3]=e_3, [e_4,e_2]=-e_4, [e_4,e_1]=e_4+e_3,  \omega(e_1,e_3)=1. $\\
$\bullet\,E_{1,\alpha}(\alpha \neq 0,1) :\, [e_1,e_2]=e_2, [e_2,e_3]=e_3, [e_4,e_2]=-e_4, [e_4,e_1]=\alpha e_4,  \omega(e_1,e_2)=1. $\\
$\bullet\,F_{1,\alpha}(\alpha \neq 0,1) :\,[e_1,e_2]=e_2, [e_2,e_3]=e_3, [e_4,e_2]=-e_4, [e_4,e_1]=\alpha e_4+e_2,\\
\hspace*{2.5 cm} \omega(e_1,e_2)=\omega(e_4,e_1)=1. $\\
$\bullet\,G_{1,\alpha} :\,[e_1,e_2]=e_2, [e_2,e_3]=e_3, [e_4,e_2]=e_1-e_4, [e_4,e_1]=e_4+\alpha e_2, \\
\hspace*{2.5 cm} \omega(e_1,e_2)=1,  \omega(e_4,e_1)=\alpha . $\\
$\bullet\,H_{1,\alpha}:\,[e_1,e_2]=e_2, [e_2,e_3]=e_3, [e_4,e_2]=e_1+e_3-e_4,\, [e_4,e_1]=e_4+\alpha e_2, \\
\hspace*{2.5 cm} \omega(e_1,e_2)=1,  \omega(e_4,e_1)=\alpha. $\\
$\bullet\,\tilde{A_\alpha} :\, [e_1,e_2]=e_1,  [e_1,e_3]=e_1+e_2,  [e_2,e_3]=e_3+\alpha e_1, [e_4,e_3]=e_4, \omega(e_2,e_3)=-1. $\\
$\bullet\,\tilde{B} :\, [e_1,e_2]=e_2,[e_1,e_3]=e_2+e_3, [e_2,e_3]=e_1, [e_4,e_2]=-e_4, [e_4,e_1]=-2e_4, \omega(e_2,e_3)=2. $\\
$\bullet\,\tilde{C_\alpha} (\alpha \neq 0,-1) :\, [e_1,e_2]=e_2, [e_1,e_3]=\alpha e_3,  [e_2,e_3]=e_1, [e_4,e_1]=-(1+\alpha)e_4,
 \\ \hspace*{2.5 cm} \omega(e_2,e_3)=1+\alpha.$
\end{theorem}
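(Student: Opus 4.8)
\section*{Proof proposal}

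The statement is a classification up to isomorphism, so the plan is to begin with a completely general $4$-dimensional nontrivial $\omega$-Lie algebra $(\mathfrak{g},[\,,\,],\omega)$ over $\mathbb{C}$ and to normalize the pair $([\,,\,],\omega)$ step by step until only the listed representatives survive. The two governing constraints are condition (2) in the definition of an $\omega$-Lie algebra (the generalized Jacobi identity) and the skew-symmetry of both $[\,,\,]$ and $\omega$; essentially the whole proof is the systematic exploitation of these as polynomial relations among the structure constants.

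First I would record the structural input that organizes everything: in a nontrivial $\omega$-Lie algebra the skew form $\omega$ cannot have arbitrary rank. A direct computation with the generalized Jacobi identity yields the cocycle-type relation $\omega([x,y],z)+\omega([y,z],x)+\omega([z,x],y)=0$ for all $x,y,z$, and combining this with $\omega\neq 0$ pins down $\mathrm{rank}\,\omega=2$. (This is exactly why every algebra in the list has $\omega$ supported, after a change of basis, on a single distinguished pair of vectors.) Consequently I may choose a basis $\{e_1,e_2,e_3,e_4\}$ in which $\omega$ is concentrated on two of the basis vectors, and the radical $\mathfrak{r}=\ker\omega$ is then a codimension-$2$ subspace which, by the same identities, is a subalgebra.

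Next I would reduce to a $3$-dimensional sub-problem, and here the naming in the list is itself the hint: every $4$-dimensional entry contains one of the five $3$-dimensional algebras $L_1,L_2,A_\alpha,B,C_\alpha$ on $\mathrm{span}\{e_1,e_2,e_3\}$, and the fourth generator $e_4$ only records the extension data. So I would show, using the rank-$2$ normalization together with the generalized Jacobi identity, that $\mathfrak{g}$ contains a $3$-dimensional nontrivial $\omega$-Lie subalgebra $\mathfrak{h}=\mathrm{span}\{e_1,e_2,e_3\}$, which by the previous theorem (Chen--Liu--Zhang) must be one of these five. I would then treat $\mathfrak{g}$ as the one-dimensional extension $\mathfrak{h}\oplus\mathbb{C}e_4$: the unknowns are the brackets $[e_4,e_i]=\sum_j c_{ij}e_j$ and the scalars $\omega(e_4,e_i)$, and for each of the five choices of $\mathfrak{h}$ I would substitute into every instance of the generalized Jacobi identity involving $e_4$ (together with the cocycle condition on $\omega$). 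This produces, case by case, a finite polynomial system in the $c_{ij}$ and $\omega(e_4,e_i)$ whose full solution set I would determine.

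Finally I would pass from solutions to isomorphism classes: within each branch I would use the residual freedom of basis changes that preserve the normal form of $\omega$ to rescale and shift $e_4$ (and adjust the $e_i$), collapsing continuous families to the canonical representatives and reading off the admissible parameter ranges — the exclusions $\alpha\neq 0,1$ for $E_{1,\alpha},F_{1,\alpha}$ and $\alpha\neq 0,-1$ for $\tilde{C_\alpha}$ appearing precisely where a branch degenerates into another entry or into a trivial algebra. I expect the main obstacle to be neither any single computation nor the rank-$2$ lemma, but the bookkeeping of the case analysis: five choices of $\mathfrak{h}$, each splitting into sub-branches according to which $\omega(e_4,e_i)$ vanish and which eigenvalue configuration $\mathrm{ad}\,e_4$ realizes, followed by the delicate task of proving that the surviving representatives are pairwise non-isomorphic under an $\omega$-preserving isomorphism. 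Controlling this explosion, and in particular checking that no two entries of the list coincide, is where the real work lies.
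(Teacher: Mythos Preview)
The paper does not prove this theorem at all: it is stated with the attribution ``(Chen--Zhang \cite{Chen0})'' and used as an imported classification, so there is no in-paper proof to compare your proposal against. Your outline is a reasonable sketch of how such a classification is typically obtained --- normalize $\omega$ to rank two, locate a $3$-dimensional nontrivial $\omega$-Lie subalgebra and invoke the Chen--Liu--Zhang list, then solve and normalize the extension data for $e_4$ --- and indeed the naming scheme in the list makes clear that this is exactly the organizing principle of the original Chen--Zhang argument.

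One caution if you intend to actually carry this out rather than cite it: the step ``$\mathfrak{g}$ contains a $3$-dimensional nontrivial $\omega$-Lie subalgebra $\mathfrak{h}$'' is not automatic from $\mathrm{rank}\,\omega=2$ alone. You need to argue that some $3$-dimensional subspace containing the two vectors on which $\omega$ is supported is closed under the bracket, and this uses the $\omega$-Jacobi identity in a nontrivial way (in particular, the kernel of $\omega$ is an ideal in the nontrivial case, which is what lets you arrange the brackets so that a suitable $3$-plane is a subalgebra). Everything else in your plan is bookkeeping, as you correctly anticipate.
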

\begin{theorem}[\cite{11}, Theorem 2] A finite-dimensional semisimple $\omega$-Lie algebra is
 either a Lie algebra, or has dimension $\leq 4$.
\end{theorem}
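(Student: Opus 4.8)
The plan is to dispose of the trivial case first and then argue quantitatively in the nontrivial case. If $\omega=0$ then $\mathfrak g$ is by definition an ordinary Lie algebra and there is nothing to prove, so assume $\omega\neq 0$. Since $\mathfrak g$ is semisimple it has trivial centre and no nonzero solvable ideal (in particular no nonzero abelian ideal). The strategy is therefore to show that as soon as $\dim\mathfrak g\geq 5$ the two axioms force the existence of such an ideal, contradicting semisimplicity; equivalently, to bound $\dim\mathfrak g$ by $4$.

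The central tool is to read the second axiom as an identity for the ordinary Jacobiator. Writing $J(x,y,z)=[[x,y],z]+[[y,z],x]+[[z,x],y]$, the defining relation becomes
\[
J(x,y,z)=\omega(x,y)z+\omega(y,z)x+\omega(z,x)y .
\]
Two consequences drive the argument. First, a rigidity principle: if $x,y,z$ are linearly independent and pairwise commute, then $J(x,y,z)=0$ forces $\omega(x,y)=\omega(y,z)=\omega(z,x)=0$, so $\omega$ can only be supported on pairs that fail to commute, which ties the rank of $\omega$ to the bracket. Second, fixing $a,b$ with $\omega(a,b)=1$ and substituting $x=a$, $y=b$, with $z$ arbitrary, yields the operator identity
\[
\operatorname{ad}_{[a,b]}=[\operatorname{ad}_a,\operatorname{ad}_b]+\mathrm{Id}_{\mathfrak g}+P ,
\qquad P(z)=\omega(b,z)\,a+\omega(z,a)\,b ,
\]
where $P$ has rank at most $2$ with image in $\operatorname{span}(a,b)$. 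The emergence of the $\mathrm{Id}_{\mathfrak g}$ summand is the source of all the rigidity: it prevents $\operatorname{ad}$ from being a Lie homomorphism except on a rank-$\le 2$ correction, and it obstructs a usual semisimple root decomposition unless $\mathfrak g$ is small.

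Next I would introduce $\mathfrak h=\operatorname{rad}\omega=\{x:\omega(x,\cdot)=0\}$, whose codimension equals $\operatorname{rank}\omega$, an even number $2r\geq 2$. Because the correction term in the displayed Jacobi identity vanishes for triples drawn from $\mathfrak h$, the Jacobiator restricts to $0$ on $\mathfrak h$, so $\mathfrak h$ is internally Lie-like. The two steps I expect to carry the proof are: (i) show $\operatorname{rank}\omega=2$, so that $\dim\mathfrak g=\dim\mathfrak h+2$; and (ii) show that semisimplicity forces $\dim\mathfrak h\leq 2$. For (i) one uses the rigidity principle to rule out a pair of $\omega$-orthogonal symplectic planes. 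For (ii) the operator identity is used to analyse the action of the symplectic pair $a,b$ on $\mathfrak h$: the plan is to show that the $\mathrm{Id}_{\mathfrak g}+P$ term makes any $\operatorname{ad}$-invariant complement generate a nonzero solvable ideal once $\dim\mathfrak h>2$, contradicting semisimplicity. Combining (i) and (ii) gives $\dim\mathfrak g\leq 4$.

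The main obstacle is precisely the quantitative control in (i)--(ii). The honest difficulty is that $\mathfrak h=\operatorname{rad}\omega$ need not be an ideal of $\mathfrak g$ — the classified algebra $\tilde{A}_{\alpha}$ already has $[e_1,e_3]=e_1+e_2\notin\mathfrak h$ — so one cannot simply pass to a quotient and invoke ordinary semisimple Lie theory. To finish cleanly I would combine the rank computation with the classification of low-dimensional $\omega$-Lie algebras recalled above: once $\operatorname{rank}\omega=2$ is established, $\mathfrak g$ is a codimension-$2$ extension, and I would argue that any semisimple such extension must already occur among the $3$- and $4$-dimensional algebras of the two classification theorems, whence $\dim\mathfrak g\leq 4$. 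The delicate point throughout is producing the solvable ideal in high dimension while respecting that the $\omega$-correction is not a cocycle — indeed $\omega([x,y],z)+\omega([y,z],x)+\omega([z,x],y)\neq 0$ in general, as the algebra $C_\alpha$ shows — so it cannot be split off by purely cohomological means.
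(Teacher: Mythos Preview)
The paper does not prove this statement at all: it is quoted verbatim as Theorem~2 of Zusmanovich~\cite{11} and used as a black box, with no argument supplied. So there is no ``paper's own proof'' to compare against; any assessment has to be on the internal merits of your sketch.

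On those merits, what you have written is an outline with the two decisive steps left open. Step~(i), that $\operatorname{rank}\omega=2$, is asserted to follow from the ``rigidity principle'' ruling out two $\omega$-orthogonal symplectic planes, but you do not carry this out, and the principle as stated (pairwise commuting independent triples force $\omega$ to vanish on them) does not by itself exclude rank~$4$: one would need to manufacture such a commuting triple inside an arbitrary semisimple $\omega$-Lie algebra of rank~$\geq 4$, and you give no mechanism for that. Step~(ii) is likewise only a plan (``the plan is to show that the $\mathrm{Id}_{\mathfrak g}+P$ term makes any $\operatorname{ad}$-invariant complement generate a nonzero solvable ideal once $\dim\mathfrak h>2$''), with no construction of the ideal and no verification of solvability.

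More seriously, the fallback you propose at the end is circular. You write that once $\operatorname{rank}\omega=2$ is known, you would ``argue that any semisimple such extension must already occur among the $3$- and $4$-dimensional algebras of the two classification theorems.'' But those classification theorems classify $\omega$-Lie algebras \emph{of dimension $3$ and $4$}; they say nothing about algebras of dimension $\geq 5$, and cannot be invoked to conclude that a semisimple codimension-$2$ extension has $\dim\mathfrak h\leq 2$. That is precisely the content of the theorem you are trying to prove. You also correctly flag that $\mathfrak h=\operatorname{rad}\omega$ need not be an ideal, which blocks the most natural quotient argument, but you do not offer a workaround. As it stands the proposal identifies reasonable ingredients (the Jacobiator identity, the operator identity with the $\mathrm{Id}_{\mathfrak g}$ term, the radical of $\omega$) but does not close either of the two gaps it names, and the appeal to the low-dimensional classification cannot rescue it.
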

\begin{proposition}[\cite{12}, Proposition 7.1]
    The algebras $A_\alpha (\alpha \in \mathbb{C}$, $B$, and $C_\alpha\, (0,-1 \neq \alpha \in \mathbb{C})$ are all non-Lie 3-dimensional complex simple $\omega$-Lie algebras.
\end{proposition}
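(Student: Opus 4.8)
The statement has two parts, which I would dispatch in turn. The ``non-Lie'' assertion is immediate: an $\omega$-Lie algebra reduces to an ordinary Lie algebra exactly when $\omega\equiv 0$, whereas in each family the defining data give $\omega(e_2,e_3)=-1$ for $A_\alpha$, $\omega(e_2,e_3)=2$ for $B$, and $\omega(e_2,e_3)=1+\alpha$ for $C_\alpha$. The last is nonzero because the admissible range for $C_\alpha$ excludes $\alpha=-1$. Hence $\omega\neq 0$ throughout, and none of these algebras is a Lie algebra.

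For simplicity I would show that none of $A_\alpha$, $B$, $C_\alpha$ possesses a proper nonzero ideal $I$ (a subspace with $[\mathfrak{g},I]\subseteq I$); since all three are visibly non-abelian, this yields simplicity. As $\dim\mathfrak{g}=3$, such an $I$ is either two-dimensional or one-dimensional, and I would eliminate each case.

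The two-dimensional (codimension-one) case I would settle by perfectness. Reading off the bracket tables shows $[\mathfrak{g},\mathfrak{g}]=\mathfrak{g}$ in each family (for $C_\alpha$ one uses $\alpha\neq 0$ to recover $e_3=\alpha^{-1}[e_1,e_3]$). If $I$ were a codimension-one ideal, the bracket would descend to the one-dimensional quotient $\mathfrak{g}/I$, where the induced bracket necessarily vanishes (an alternating bilinear map out of a one-dimensional space is zero); thus $[\mathfrak{g},\mathfrak{g}]\subseteq I\subsetneq\mathfrak{g}$, contradicting perfectness.

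The one-dimensional case is where the $\omega$-structure does the work, and this is the step I regard as the crux. Suppose $I=\spa\{v\}$ is an ideal, so that $[x,v]=\lambda(x)v$ for a linear functional $\lambda$. Substituting $(x,y,v)$ into the $\omega$-Jacobi identity and using $[x,v]=\lambda(x)v$, the two cyclic terms involving $v$ cancel exactly, leaving $\lambda([x,y])v=\omega(x,y)v+\omega(y,v)x+\omega(v,x)y$. Choosing $x,y$ so that $\{v,x,y\}$ is a basis forces $\omega(y,v)=\omega(v,x)=0$, whence $\omega(v,\cdot)\equiv 0$; that is, any one-dimensional ideal must lie in the radical of $\omega$. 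A short computation gives that this radical equals $\spa\{e_1\}$ in all three families (using $1+\alpha\neq 0$ for $C_\alpha$). Finally $\spa\{e_1\}$ is not an ideal, since $[e_3,e_1]=-(e_1+e_2)$ for $A_\alpha$, $[e_3,e_1]=-(e_2+e_3)$ for $B$, and $[e_2,e_1]=-e_2$ for $C_\alpha$, none of which lies in $\spa\{e_1\}$. This rules out one-dimensional ideals and completes the argument. The only real subtlety is the confinement-to-the-radical step: one must verify that the cancellation of the cyclic terms is exact and that the basis completion is legitimate, after which everything reduces to the routine verifications indicated above.
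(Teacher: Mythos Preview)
Your argument is correct. The paper does not supply its own proof of this proposition; it merely quotes the result from \cite{12} (Chen--Zhang) and uses it as input for the subsequent sections, so there is no in-paper proof to compare against. Your self-contained argument---perfectness to exclude codimension-one ideals, and the $\omega$-Jacobi identity to confine any one-dimensional ideal to the radical of $\omega$, which is then checked directly not to be an ideal---is sound; in particular the cancellation of the two cyclic terms $[[y,v],x]+[[v,x],y]=-\lambda(y)\lambda(x)v+\lambda(x)\lambda(y)v=0$ is exact, and the completion of $v$ to a basis is legitimate in dimension three. The only remark is that the quotient argument uses nothing about $\omega$: it rests purely on skew-symmetry of the bracket and the ideal condition $[\mathfrak g,I]\subseteq I$, which is all that is needed.
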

\begin{proposition}[\cite{12}, Proposition 7.2]
   There do not exist non-Lie 4-dimensional complex simple $\omega$-Lie algebras. 
\end{proposition}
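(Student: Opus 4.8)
The plan is to leverage the complete classification of $4$-dimensional nontrivial $\omega$-Lie algebras recorded in the Chen--Zhang theorem above. Every algebra on that list is nontrivial (its form $\omega$ is nonzero), hence automatically non-Lie, so it suffices to show that none of the listed algebras is simple, i.e. that each possesses a proper nonzero ideal. Recall that a subspace $I\subseteq\mathfrak{g}$ is an ideal precisely when $[I,\mathfrak{g}]\subseteq I$, and that a line $\mathbb{C}e_i$ is an ideal exactly when $[e_i,e_j]\in\mathbb{C}e_i$ for every basis vector $e_j$. The first and main step is therefore to scan each of the roughly twenty algebras (and parametrized families) and exhibit one explicit ideal.

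For the large block $L_{1,1},\dots,L_{1,8}$ together with $E_{1,\alpha},F_{1,\alpha},G_{1,\alpha},H_{1,\alpha}$, I would use $I=\mathbb{C}e_3$: in each of these the only nonzero bracket with $e_3$ as an entry is $[e_2,e_3]=e_3$, so that $[e_3,e_1]=0$, $[e_3,e_2]=-e_3$, $[e_3,e_4]=0$, whence $[\mathbb{C}e_3,\mathfrak{g}]\subseteq\mathbb{C}e_3$. That $e_3$ may occur on the right-hand side of other brackets (as in $L_{1,2}$ or $H_{1,\alpha}$) is irrelevant to the ideal condition. For the three ``tilde'' algebras $\tilde{A}_\alpha,\tilde{B},\tilde{C}_\alpha$, which are built by adjoining $e_4$ to a simple $3$-dimensional $\omega$-Lie algebra, a direct reading of the brackets shows $[e_4,\mathfrak{g}]\subseteq\mathbb{C}e_4$, so the line $I=\mathbb{C}e_4$ is bracket-invariant and hence an ideal; the same choice $I=\mathbb{C}e_4$ handles $L_{2,1}$ and $L_{2,3}$.

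The remaining and most delicate cases are $L_{2,2}$ and $L_{2,4}$, where $\mathbb{C}e_4$ fails to be an ideal because $[e_4,e_1]$ acquires an $e_3$-component, and where the $L_2$-type relation $[e_1,e_3]=e_2$ also rules out $\mathbb{C}e_3$; here no one-dimensional ideal is available. For these I would take the derived subalgebra $I=[\mathfrak{g},\mathfrak{g}]$, which is an ideal in any algebra with a bilinear bracket and which a short computation identifies with $\mathrm{span}(e_2,e_3,e_4)$, a proper $3$-dimensional ideal. I expect the only real obstacle to be organizational rather than conceptual: one must (a) fix the definition of ideal as the bracket condition $[I,\mathfrak{g}]\subseteq I$ used in the cited classification works, and (b) ensure that every parameter value and every family is covered---in particular the perfect algebras $G_{1,\alpha}$ and $H_{1,\alpha}$, for which $[\mathfrak{g},\mathfrak{g}]=\mathfrak{g}$ so the derived-subalgebra trick is unavailable, are still dispatched by $\mathbb{C}e_3$ from the second step. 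Producing an explicit proper nonzero ideal in every case defeats simplicity throughout the list, which yields the proposition.
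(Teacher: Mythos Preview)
The present paper does not give its own proof of this proposition: it is quoted as Proposition~7.2 of \cite{12} (Chen--Zhang) and used as a black box, so there is no in-paper argument to compare your attempt against.

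That said, your plan is correct and is essentially the argument one finds in the cited source. The reduction to the Chen--Zhang list is sound (a trivial $\omega$-Lie algebra is a Lie algebra by definition, so any non-Lie example must appear on the list), and the ideals you propose all work: $\mathbb{C}e_3$ is a one-dimensional ideal in each of $L_{1,1},\dots,L_{1,8}$, $E_{1,\alpha}$, $F_{1,\alpha}$, $G_{1,\alpha}$, $H_{1,\alpha}$ since $e_3$ brackets nontrivially only via $[e_2,e_3]=e_3$; $\mathbb{C}e_4$ is an ideal in $\tilde{A}_\alpha$, $\tilde{B}$, $\tilde{C}_\alpha$, $L_{2,1}$, $L_{2,3}$; and for $L_{2,2}$, $L_{2,4}$ the derived subspace $[\mathfrak{g},\mathfrak{g}]=\mathrm{span}(e_2,e_3,e_4)$ is a proper ideal (the derived subspace is automatically an ideal since $[\,[\mathfrak{g},\mathfrak{g}],\mathfrak{g}\,]\subseteq[\mathfrak{g},\mathfrak{g}]$ holds for any bilinear bracket, independently of any Jacobi-type identity). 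Your caveat~(a) is also well placed: the definition of ideal used in \cite{11,12} is precisely the bracket condition $[I,\mathfrak{g}]\subseteq I$, with no additional requirement on $\omega$, so nothing further needs to be checked.
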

The above results show essentially the importance of characterize biderivation, local (2-local) derivations and automorphism of low-dimensional $\omega$-Lie algebras. 
\section{Local and $2$-local derivations}
The exploration of local and $2$-local derivations of $\omega$-Lie algebras $\mathfrak{g}$ relies heavily on a key theorem established by Chen Y. and al., as documented in their work \cite{Chen2}. This theorem serves as our principal instrument for delving into the intricacies of the derivations in question. 
\begin{theorem}
Every local derivation of $L_{1,1}$ is a derivation.
\end{theorem}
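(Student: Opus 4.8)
The plan is to combine the explicit description of $Der(L_{1,1})$ with elementary linear algebra. First I would fix the basis $(e_1,e_2,e_3,e_4)$ and write a derivation $D$ as a $4\times 4$ matrix; imposing the Leibniz rule on the defining relations $[e_1,e_2]=e_2$, $[e_2,e_3]=e_3$, $[e_4,e_2]=-e_4$ (all other brackets of basis vectors being zero) produces a linear system on the entries of $D$, whose solution space is the explicit $Der(L_{1,1})$ recorded in \cite{Chen2}. Keeping this description at hand is the whole engine of the argument.

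Next, let $\Delta$ be a local derivation with matrix $A=(a_{ij})$. For each $k$ there is a derivation $D_k$ with $\Delta(e_k)=D_k(e_k)$, so the $k$-th column of $A$ coincides with the $k$-th column of an admissible derivation matrix; reading this off for $k=1,2,3,4$ already kills the entries of $A$ that vanish identically on $Der(L_{1,1})$ and shows that each surviving column depends on the free parameters of $Der(L_{1,1})$ exactly as a genuine derivation would, but a priori with independent choices of those parameters in different columns. To tie the columns together I would evaluate $\Delta$ on the vectors $e_i+e_j$ for $i<j$: linearity gives $A(e_i+e_j)=\Delta(e_i)+\Delta(e_j)$, while $\Delta(e_i+e_j)=D_{ij}(e_i+e_j)$ must again be an admissible derivation image, and comparing the two forces the parameter governing the action on $e_i$ to agree with the one governing the action on $e_j$. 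A final check on a generic vector $x=\sum_k c_k e_k$, writing $\Delta(x)=D_x(x)$ and matching against $\sum_k c_k\Delta(e_k)$, confirms that these finitely many relations are the only ones, so $A$ satisfies precisely the defining equations of $Der(L_{1,1})$ and $\Delta$ is a derivation.

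The main obstacle I anticipate is not conceptual but bookkeeping: first, making sure the description of $Der(L_{1,1})$ is complete, including the derivations that fail to be $\omega$-derivations (recall $Der_\omega \subsetneq Der$ in general); and second, handling those vectors $x$ at which the family $\{D(x):D\in Der(L_{1,1})\}$ drops dimension, where the pointwise condition $\Delta(x)\in\{D(x):D\in Der(L_{1,1})\}$ is weakest. It is exactly there that one must verify no "exotic" linear $\Delta$ can slip through, which is why the evaluation on the sums $e_i+e_j$, and if necessary on a few triple sums, is needed rather than the basis vectors alone.
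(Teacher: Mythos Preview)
Your proposal is correct and follows essentially the same approach as the paper: use the explicit matrix form of $Der(L_{1,1})$ from \cite{Chen2} and verify that the matrix of a local derivation $\Delta$ lies in it. For $L_{1,1}$ your precautionary step on sums $e_i+e_j$ turns out to be unnecessary, because the six free parameters in the derivation matrix impose no relations \emph{across} columns (the only constraints are the within-column ones $a_{23}=-a_{13}$ and $a_{24}=-a_{14}$), so testing on the four basis vectors alone already forces $A$ into the required shape---which is exactly what the paper's short computation with a generic $x$ amounts to.
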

\begin{proof}
Let $\Delta$ be an arbitrary local derivation of $L_{1,1}$. By the definition for all $x \in L_{1,1}$ there exists a derivation $D_x$ on $L_{1,1}$ such that $\Delta(x)=D_x(x)$.\\
According to Tble 3 in \cite{Chen2}, the derivation $D_x$ has the following matrix form:
\begin{equation}\label{form11}
A_x=\begin{pmatrix}
0&0&a_x&b_x\\
0&0&-a_x&-b_x\\
0&0&c_x&d_x\\
0&0&h_x&f_x
\end{pmatrix}\quad a_x,b_x,c_x,d_x,f_x,h_x \in \mathbb{C}
\end{equation}
We write $\Delta(x)=B\bar{x}$, where $B=(b_{ij})_{1\leq i,j\leq 4}, \, \bar{x}=(x_1,x_2,x_3,x_4)$ is the vector correspond to $x$. Then, $\Delta(x)=D_x(x)$ implies that
\begin{equation}
\begin{cases}
\,b_{11}x_1+b_{12}x_2+b_{13}x_3+b_{14}x_4=a_xx_3+b_xx4,&\\
\,b_{21}x_1+b_{22}x_2+b_{23}x_3+b_{24}x_4=-a_xx_3-b_xx_4,&\\
\,b_{31}x_1+b_{32}x_2+b_{33}x_3+b_{34}x_4=c_xx_3+d_xx_4,&\\
\,b_{41}x_1+b_{42}x_2+b_{43}x_3+b_{44}x_4=h_xx_3+f_xx_4.
\end{cases}
\end{equation} 
Which implies that
\begin{equation}
\begin{cases}
\,b_{11}=b_{12}=b_{21}=b_{22}=b_{31}=b_{32}=b_{41}=b_{42},&\\
\, b_{13}=-b_{23},&\\
\,b_{14}=-b_{24},&\\
\, b_{33}=c_x,&\\
\,b_{34}=d_x,&\\
\, b_{43}=h_x,&\\
\, b_{44}=f_x.
\end{cases}
\end{equation}
These equalities show that the matrix $B$ of the linear map $\Delta$ is of the form (\ref{form11}).\\
Therefore, $\Delta$ is a derivation. This completes the proof.
\end{proof}
Similar guments can be applied to the remaining $4$-dimensional $\omega$-Lie algebras, so we summarize the result in the following Theorem, without detailed proofs.
\begin{theorem}
Every local derivation of any $4$-dimensional $\omega$-Lie algebras over $\mathbb{C}$ is a derivation.
\end{theorem}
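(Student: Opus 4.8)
The plan is to repeat, for each algebra in the Chen--Zhang classification \cite{Chen0} of $4$-dimensional nontrivial $\omega$-Lie algebras, the argument already carried out above for $L_{1,1}$. Fix such an algebra $\mathfrak{g}$ with basis $e_1,e_2,e_3,e_4$ and let $\Delta$ be a local derivation, written as $\Delta(x)=B\bar x$ with $B=(b_{ij})_{1\le i,j\le 4}$. The explicit matrix description of $\mathrm{Der}(\mathfrak{g})$ in this basis is tabulated in \cite{Chen2}: in every case a derivation $D$ has a fixed shape in which several entries vanish identically and the remaining ones satisfy prescribed linear relations (compare the matrix \eqref{form11} obtained for $L_{1,1}$, where the first two columns vanish and the second row is the negative of the first in columns $3,4$).

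For each $x\in\mathfrak{g}$ pick the derivation $D_x$ with $\Delta(x)=D_x(x)$ and substitute a general $x=\sum_{i}x_i e_i$ into the identity $B\bar x=D_x(x)$, exactly as in the $L_{1,1}$ computation. Comparing the two sides coordinate by coordinate, and using that they must agree for all $x$, yields a linear system in the unknowns $b_{ij}$: for the indices $i$ whose column is forced to vanish in every derivation one gets that the $i$-th column of $B$ is zero, while matching the coefficients of the remaining $x_i$ shows that the surviving entries of $B$ obey precisely the linear relations defining the derivation shape of the table. When the derivation algebra of a particular $\mathfrak{g}$ is small, so that $\mathrm{Der}(\mathfrak{g})\cdot x$ is a line for generic $x$, the identity only gives $\Delta(x)=\lambda(x)D_0(x)$ for a fixed generator $D_0$; here one evaluates at $x=e_i$ and at $x=e_i+e_j$ and uses the linearity of $\Delta$ to force $\lambda$ to be constant, after which $\Delta=\lambda D_0\in\mathrm{Der}(\mathfrak{g})$. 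In all cases the conclusion is that $B$ lies in $\mathrm{Der}(\mathfrak{g})$, so $\Delta$ is a derivation.

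The only difficulty here is organizational rather than conceptual: the classification list is long and includes the one-parameter families $E_{1,\alpha}$, $F_{1,\alpha}$, $G_{1,\alpha}$, $H_{1,\alpha}$, $\tilde{A}_\alpha$ and $\tilde{C}_\alpha$, for which one must check that the derivation shape --- and hence the linear system solved above --- behaves uniformly in $\alpha$ on the allowed parameter range, the degenerate values such as $\alpha=0,1$ or $\alpha=0,-1$ being already excluded by the classification. Since none of these case checks introduces any idea beyond the explicit $L_{1,1}$ computation, it is legitimate to present the general mechanism once and record the uniform conclusion, which is exactly what the theorem does.
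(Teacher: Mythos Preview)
Your proposal is correct and follows exactly the route the paper takes: the paper proves the $L_{1,1}$ case in detail and then states the present theorem with the single sentence ``Similar arguments can be applied to the remaining $4$-dimensional $\omega$-Lie algebras,'' so your plan of running the $L_{1,1}$ template against each entry of the Chen--Zhang list using the derivation tables of \cite{Chen2} is precisely what is intended. Your extra remarks---about testing at $e_i+e_j$ when the derivation shape carries relations tying different columns together, and about uniformity in the parameter $\alpha$ for the families $E_{1,\alpha},F_{1,\alpha},G_{1,\alpha},H_{1,\alpha},\tilde A_\alpha,\tilde C_\alpha$---go slightly beyond what the paper spells out but are exactly the checks one has to perform when carrying the computation through, so they strengthen rather than diverge from the paper's argument.
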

In \cite{oubb} we are prove that any local derivation of three dimensional $\omega$-Lie algebra over $\mathbb{C}$ is a derivation. Since, any local derivations on finite dimensional complex semisimple Lie algebra is a derivation (see \cite{Ayo2}), then the following theorem hold.
\begin{theorem}
 Every local derivation of any finite-dimensional semisimple $\omega$-Lie algebras over $\mathbb{C}$ is a derivation.   
\end{theorem}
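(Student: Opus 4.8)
The plan is to deduce the result from the classification of finite-dimensional semisimple $\omega$-Lie algebras together with the special cases already settled. Concretely, let $\mathfrak{g}$ be a finite-dimensional semisimple $\omega$-Lie algebra over $\mathbb{C}$ and let $\Delta\colon\mathfrak{g}\rightarrow\mathfrak{g}$ be a local derivation. By the structure theorem quoted above (\cite{11}, Theorem 2), exactly one of the following holds: $\mathfrak{g}$ is an ordinary Lie algebra, or $\dim\mathfrak{g}\le 4$. I would treat these possibilities separately, so that in each of them $\Delta$ is identified with a genuine derivation.

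In the first case $\mathfrak{g}$ is a finite-dimensional complex semisimple Lie algebra, and I would simply invoke the theorem of Ayupov and Kudaybergenov \cite{Ayo2}, according to which every local derivation of such an algebra is a derivation; hence $\Delta\in Der(\mathfrak{g})$. In the second case, if $\dim\mathfrak{g}\le 2$ there are no nontrivial $\omega$-Lie algebras, so $\mathfrak{g}$ is again a semisimple Lie algebra and the previous argument applies; if $\dim\mathfrak{g}=3$, I would appeal to the description of local derivations of three-dimensional complex $\omega$-Lie algebras obtained in \cite{oubb}, which shows that every such map is a derivation, in particular for the simple algebras $A_\alpha$, $B$, and $C_\alpha$; and if $\dim\mathfrak{g}=4$, the conclusion is exactly the preceding theorem of this section, which asserts that every local derivation of any $4$-dimensional $\omega$-Lie algebra over $\mathbb{C}$ is a derivation. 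Collecting the cases gives $\Delta\in Der(\mathfrak{g})$ in general.

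The only point requiring care — and what I would check most carefully — is that the case analysis is genuinely exhaustive and that the quoted results apply without extra hypotheses: the $4$-dimensional theorem must be, and is, stated for \emph{all} $4$-dimensional $\omega$-Lie algebras with no semisimplicity assumption, so it covers any $4$-dimensional semisimple example; and the $3$-dimensional result from \cite{oubb} must cover every nontrivial $3$-dimensional algebra in the Chen--Liu--Zhang list. Beyond this bookkeeping there is no real obstacle, since all the computational content — the explicit matrix forms of derivations in low dimension, and the structure theory of semisimple Lie algebras — has already been carried out in the cited references and in the earlier part of this section.
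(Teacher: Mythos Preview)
Your proposal is correct and follows essentially the same approach as the paper: a case split via the structure theorem \cite{11} into the semisimple Lie algebra case (handled by \cite{Ayo2}), the $3$-dimensional case (handled by \cite{oubb}), and the $4$-dimensional case (handled by the preceding theorem). In fact, your write-up is more explicit and careful than the one-sentence justification the paper gives.
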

Now, we prove that any 2-local derivation on 4-dimensional $\omega$-Lie algebra over $\mathbb{C}$ is a derivation.
\begin{theorem}
Every $2$-Local derivation of the $\omega$-algebras $L_{1,3}$, $L_{1,4}$, $L_{1,7}$,  $L_{1,8},$  $L_{2,2},$ $L_{2,3}$,  $L_{2,4}$, $F_{1,\alpha}$ is a derivation.
\end{theorem}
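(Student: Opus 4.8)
The plan is to proceed exactly as in the proof of the theorem for $L_{1,1}$, handling each algebra in the list separately but by the same mechanism. For each of the eight algebras $L_{1,3}$, $L_{1,4}$, $L_{1,7}$, $L_{1,8}$, $L_{2,2}$, $L_{2,3}$, $L_{2,4}$, $F_{1,\alpha}$, I would first look up the explicit matrix form of a generic derivation from the relevant table in \cite{Chen2}; call the space of such matrices $\mathcal{D}\subseteq gl_4(\mathbb{C})$. Let $\Delta$ be a $2$-local derivation. For every pair $x,y$ there is a single derivation $D_{x,y}\in\mathcal{D}$ with $\Delta(x)=D_{x,y}(x)$ and $\Delta(y)=D_{x,y}(y)$. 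The first step is to run through the basis vectors $e_1,e_2,e_3,e_4$: taking $y=e_i$ and letting $x$ vary, one extracts enough linear constraints to show that $\Delta$ restricted to each coordinate line agrees with a matrix in $\mathcal{D}$, and in fact that there is a fixed matrix $B$ such that $\Delta(e_i)=Be_i$ for all $i$. The key point is that the entries of a derivation matrix in $\mathcal{D}$ are governed by only a few free parameters, so matching $\Delta(e_i)=D_{x,e_j}(e_i)$ on pairs of basis vectors pins those parameters down consistently.

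Once a candidate linear map $B\in\mathcal{D}$ is identified with $\Delta(e_i)=Be_i$, the second step is to show $\Delta=B$ on all of $\mathfrak{g}$, i.e.\ that the (a priori nonlinear) map $\Delta$ is actually linear. Here I would use the homogeneity $\Delta(\lambda x)=\lambda\Delta(x)$ (immediate from the defining property applied to the pair $x,\lambda x$, or to $\lambda x$ alone) together with the pairwise compatibility: for an arbitrary $x=\sum x_i e_i$, choose the pair $(x,e_j)$; then $D_{x,e_j}$ is a derivation agreeing with $\Delta$ at both $x$ and $e_j$, and since its matrix lies in $\mathcal{D}$ and already matches $B$ on $e_j$ for enough $j$, one forces $D_{x,e_j}=B$, hence $\Delta(x)=Bx$. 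Because $B\in\mathcal{D}$, $B$ is a genuine derivation, so $\Delta$ is a derivation. This is the same two-stage scheme (first on basis vectors, then globalize) used throughout \cite{Ayo2} and \cite{oubb}.

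The main obstacle is purely computational and case-dependent: the derivation algebras of these eight $\omega$-Lie algebras have different shapes, and for some of them (for instance when the free parameters of $\mathcal{D}$ do not decouple neatly across coordinates, or when $\mathcal{D}$ has a one-dimensional piece that is only detected by pairs involving two non-basis vectors) the argument ``$D_{x,e_j}$ already matches $B$, hence equals $B$'' requires testing against more than one auxiliary vector, or against a vector like $e_j+e_k$. One must check, algebra by algebra, that the collection of constraints coming from admissible pairs is rich enough to determine $\Delta$ uniquely; the risk is an algebra whose derivation space is large enough that $2$-locality does not suffice, in which case the statement would fail. Since the theorem asserts it holds for precisely these eight algebras, the expectation is that in each case the parameter count works out, and the verification, though tedious, is routine. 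I would present the computation in detail for one representative algebra (say $L_{1,3}$) and, as in the local-derivation case, state that ``similar arguments apply to the remaining algebras.''
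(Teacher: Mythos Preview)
Your proposal is correct in spirit but takes a more laborious route than the paper and misses the structural reason these eight algebras are singled out. The paper does \emph{not} run the two-stage ``first on basis vectors, then globalize'' scheme you describe (that is the scheme for the \emph{local}-derivation theorem, not this one). Instead, for each of these algebras the derivation space $\mathcal{D}$ has the property that a derivation is completely determined by its value at a single well-chosen basis vector $v$ (for $L_{1,4}$ the paper takes $v=e_3$; for $L_{2,3}$ the analogous automorphism proof uses $v=e_4$). Concretely, every free parameter of a generic matrix in $\mathcal{D}$ appears in the $v$-column. Given a $2$-local derivation $\Delta$, one then pairs every $x$ with this fixed anchor $v$: the derivation $D_{x,v}$ satisfies $D_{x,v}(v)=\Delta(v)$, which pins down all parameters of $D_{x,v}$ independently of $x$. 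Hence all the $D_{x,v}$ coincide with one fixed derivation $D$, and $\Delta(x)=D_{x,v}(x)=D(x)$ for every $x$, finishing the proof in one stroke.

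Your approach would also succeed after enough bookkeeping, but it obscures why precisely this list of algebras works: they are exactly those $4$-dimensional $\omega$-Lie algebras for which such a ``separating'' basis vector exists, making the anchor argument available. Recognizing this lets you avoid the case analysis over multiple basis pairs, the separate verification of linearity, and the worry you flag about needing auxiliary vectors like $e_j+e_k$.
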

\begin{proof}
The proof of the theorem will be presented for $L_{1,4}$, similar demonstrations can be carried out for the remaining cases.\\
Let $\Delta$ be an arbitrary $2$local derivation of $L_{1,4}$,then by definition, for every element $x \in L_{1,4}$, there exists a derivation $D_{x,e_3}$ of $L_{1,4}$ such that 
$$\Delta(x)=D_{x,e_3}(x)\quad and \quad \Delta(e_3)=D_{x,e_3}(e_3).$$
By Table.3 in \cite{Chen2}, the matrix $A_{x,e_3}$ of the derivation $D_{x,e_3}$ has the following matrix form
\begin{equation}\label{14}
A_{x,e_3}=\begin{pmatrix}
0&0&a_{x,e_3}&0\\
0&0&-a_{x,e_3}&0\\
0&0&a_{x,e_3}&0\\
0&0&b_{x,e_3}&0
\end{pmatrix}
\end{equation}
Let $y$ be an arbitrary element in $L_{1,4}$. Then there existe a derivation $D_{y,e_3}$ of $L_{1,4}$ such that 
$$\Delta(y)=D_{y,e_3}(y)\quad and \quad \Delta(e_3)=D_{y,e_3}(e_3).$$ 
By Table.3 in \cite{Chen2}, the matrix $A_{y,e_3}$ of the derivation $D_{y,e_3}$ has the following matrix form
\begin{equation}\label{14}
A_{y,e_3}=\begin{pmatrix}
0&0&a_{y,e_3}&0\\
0&0&-a_{y,e_3}&0\\
0&0&a_{y,e_3}&0\\
0&0&b_{y,e_3}&0
\end{pmatrix}
\end{equation}
Since $\Delta(e_3)=D_{x,e_3}(e_3)=D_{y,e_3}(e_3)$, we have
$$a_{x,e_3}=a_{y,e_3} \quad and \quad b_{x,e_3}=b_{y,e_3}.$$
That it
$$D_{x,e_3}=D_{y,e_3}$$
Therefore, for any element $x$ of the algebra  $L_{1,4}$
$$\Delta(x)=D_{y,e_3}(x),$$
that it $D_{y,e_3}$ does not depend on $x$. Hence, $\Delta$ is a derivation of $L_{1,4}$.
\end{proof}
In \cite{oubb} we are prove that any 2-local derivation of three dimensional $\omega$-Lie algebra over $\mathbb{C}$ is a derivation. Since, any 2-local derivations on finite dimensional complex semisimple Lie algebra is a derivation (see \cite{Ayo3}), then the following theorem hold.
\begin{theorem}
  Every $2$-Local derivation on finite-dimensional semisimple $\omega$-algebras over $\mathbb{C}$ is a derivation.  
\end{theorem}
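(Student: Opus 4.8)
The plan is to reduce the statement to cases that have already been settled, by invoking the structure theorem for semisimple $\omega$-Lie algebras. Let $\mathfrak{g}$ be a finite-dimensional semisimple $\omega$-Lie algebra over $\mathbb{C}$ and let $\Delta\colon\mathfrak{g}\to\mathfrak{g}$ be a $2$-local derivation. By the classification result of \cite{11} quoted above (a finite-dimensional semisimple $\omega$-Lie algebra is either an ordinary Lie algebra or has dimension $\le 4$), it suffices to treat two situations, which together exhaust all possibilities even though they overlap.

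First, if $\mathfrak{g}$ is an ordinary Lie algebra, then it is a finite-dimensional complex semisimple Lie algebra and $\Delta$ is a $2$-local derivation of it in the classical sense; by the theorem of \cite{Ayo3}, every such map is a derivation, so $\Delta\in Der(\mathfrak{g})$. Second, if $\mathfrak{g}$ is a nontrivial $\omega$-Lie algebra, then, since there is no nontrivial $\omega$-Lie algebra in dimension $1$ or $2$ and $\dim\mathfrak{g}\le 4$, we have $\dim\mathfrak{g}\in\{3,4\}$. For $\dim\mathfrak{g}=3$ the conclusion is precisely the $3$-dimensional result of \cite{oubb}. For $\dim\mathfrak{g}=4$ I would appeal to the preceding Theorem of this section, after checking that every nontrivial $4$-dimensional \emph{semisimple} complex $\omega$-Lie algebra belongs to the list $L_{1,3},L_{1,4},L_{1,7},L_{1,8},L_{2,2},L_{2,3},L_{2,4},F_{1,\alpha}$ for which that theorem was established; in fact, by \cite{12} there is no non-Lie $4$-dimensional complex simple $\omega$-Lie algebra, while a semisimple $\omega$-Lie algebra of dimension $\le 4$ that is not simple would decompose into simple ideals each of dimension at least $3$ (simple $\omega$-Lie algebras exist only in dimension $\ge 3$), which is impossible in total dimension $4$, so this subcase is in fact vacuous — equivalently, it is subsumed in the first situation. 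In all cases $\Delta$ is a derivation.

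The only genuinely delicate point is this last one: one must be explicit about the meaning of ``semisimple'' for $\omega$-Lie algebras and use the Chen--Zhang classification to confirm that no $4$-dimensional nontrivial semisimple example escapes the earlier theorem (or that none exists at all). Everything else is a direct citation --- \cite{Ayo3} for the Lie case and \cite{oubb} for dimension $3$ --- and needs no further manipulation of the explicit matrices of derivations used in the preceding proofs.
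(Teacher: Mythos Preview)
Your proposal is correct and follows essentially the same route as the paper: reduce via the structure theorem of \cite{11} and then cite \cite{Ayo3} for the semisimple Lie case and \cite{oubb} for dimension $3$. The paper's own argument is in fact just that one sentence of citations; your treatment of the $4$-dimensional non-Lie case (arguing it is vacuous via \cite{12} and a dimension count on simple summands) is more explicit than what the paper provides, and correctly flags the one point---the precise meaning of ``semisimple'' for $\omega$-Lie algebras and whether it forces a decomposition into simple ideals---that the paper leaves implicit.
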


\section{Local and $2$-local automorphisms}
The examination of local and $2$-local automorphisms of $\omega$-Lie algebras $\mathfrak{g}$ heavily depends on a pivotal theorem established by Chen Y. et al., as detailed in their publication \cite{Chen2}. This theorem serves as our primary tool for delving into the complexities of the relevant automorphisms.
\begin{theorem}
Every local automorphism of $4$-dimensional $\omega$-Lie algebra over $\mathbb{C}$ is an automorphism.
\end{theorem}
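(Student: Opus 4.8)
The plan is to argue in the same spirit as the theorem on local derivations above, running through the list of $4$-dimensional nontrivial $\omega$-Lie algebras $L_{1,1},\dots,\tilde{C_\alpha}$ supplied by the Chen--Zhang classification and, for each of them, using the explicit description of the automorphism group recorded in the tables of \cite{Chen2}. Since these algebras are small and carry no representation-theoretic structure comparable to the simple case treated in \cite{Cos}, such a case-by-case verification against the explicit automorphism groups is essentially the only route available.

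Fix one such algebra $\mathfrak{g}$ with standard basis $e_1,e_2,e_3,e_4$ and let $\Delta$ be a local automorphism of $\mathfrak{g}$. Being linear, $\Delta$ is given by a matrix $B=(b_{ij})_{1\le i,j\le 4}$ with $\Delta(x)=B\bar{x}$, where $\bar{x}=(x_1,x_2,x_3,x_4)$ denotes the coordinate vector of $x$. By definition, for every $x\in\mathfrak{g}$ there is an automorphism $\theta_x$ of $\mathfrak{g}$ whose matrix $M_x$ has one of the parametrized forms listed in \cite{Chen2}, and which satisfies $B\bar{x}=M_x\bar{x}$. First I would specialize this identity at each basis vector $e_1,\dots,e_4$, which constrains the $i$-th column of $B$ to agree with the $i$-th column of \emph{some} automorphism matrix; then I would specialize it at the vectors $e_i+e_j$ (and, where a particular algebra makes it necessary, at $e_i+e_j+e_k$), which forces the free parameters attached to the different basis vectors to be mutually compatible. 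Assembling these conditions one expects to find that $B$ itself has the parametrized shape of an element of $\mathrm{Aut}(\mathfrak{g})$, so that $\Delta\in\mathrm{Aut}(\mathfrak{g})$; as in the preceding section it is enough to carry this out in one representative case, say $L_{1,4}$, and to remark that the remaining algebras are treated in exactly the same manner.

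The step I expect to be most delicate is that, in contrast with the derivation setting, the matrix $M_x$ depends on $x$ nonlinearly because $\mathrm{Aut}(\mathfrak{g})$ is not a linear subspace of $\mathrm{End}(\mathfrak{g})$; one therefore cannot simply read $B$ off from a linear space of admissible matrices, and must instead exploit the rigidity of the automorphism groups of these particular algebras --- in each of them the entries of an automorphism matrix occurring in the relevant positions are rigidly constrained (typically they are forced to vanish, to equal a single scalar parameter, or to be determined algebraically by the others). This rigidity is what makes the finitely many test vectors above sufficient to determine $B$ completely and to recognize it as an automorphism; in particular it yields $B\in\mathrm{Aut}(\mathfrak{g})$ outright, with no need for the anti-automorphism alternative that appears in the simple case of \cite{Cos}. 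The principal obstacle is thus organizational rather than conceptual: there are about twenty algebras, and for each one must check that the chosen specializations leave $B$ with no free entries beyond those permitted for an automorphism.
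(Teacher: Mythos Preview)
Your proposal is correct and follows essentially the same approach as the paper: use the explicit parametrized form of $\mathrm{Aut}(\mathfrak{g})$ from \cite{Chen2}, write $B\bar{x}=M_x\bar{x}$ for the local automorphism and the automorphism attached to $x$, and specialize at enough vectors to force $B$ into the automorphism shape, then declare the remaining algebras analogous. The only cosmetic differences are that the paper carries out the computation for $L_{1,1}$ rather than $L_{1,4}$, and writes the constraints directly from the general system rather than explicitly naming the test vectors $e_i$, $e_i+e_j$ you single out; your remark on the nonlinearity of $x\mapsto M_x$ is a helpful clarification the paper leaves implicit.
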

\begin{proof}
Let $\psi$ be an arbitrary local automorphism of $L_{1,1}$ and $B=(b_{ij})_{1\leq i,j \leq 4}$ its matrix, i.e.,
$$\psi(x)=B\bar{x},\, x\in L_{1,1},$$
where $\bar{x}$ is the vector corresponding to $x$. Then by definition, for every element $x \in L_{1,1}$, there exist elements $a_x\neq -1,b_x$ in $\mathbb{C}$ such that
$$A_x=\begin{pmatrix}
1&0&a_x&0\\
0&1&-a_x&0\\
0&0&a_x+1&0\\
0&0&b_x&1
\end{pmatrix}$$
and 
$$\psi(x)=B\bar{x}=A_x\bar{x}.$$
Therefore, 
\begin{equation}
\begin{cases}
\,b_{11}x_1+b_{12}x_2+b_{13}x_3+b_{14}x_4=x_1+a_xx_3,&\\
\,b_{21}x_1+b_{22}x_2+b_{23}x_3+b_{24}x_4=x_2-a_xx_3,&\\
\,b_{31}x_1+b_{32}x_2+b_{33}x_3+b_{34}x_4=(a_x+1)x_3,&\\
\,b_{41}x_1+b_{42}x_2+b_{43}x_3+b_{44}x_4=b_xx_3+x_4.
\end{cases}
\end{equation}
Then 
\begin{equation}
\begin{cases}
\, b_{12}=b_{14}=b_{21}=b_{24}=b_{31}=b_{32}=b_{34}=b_{41}=b_{42}=0,&\\
\,b_{11}=b_{22}=b_{44}=1,&\\
\, b_{13}=-b_{23}=b_{33}-1=a_x,&\\
\, b_{43}=b_x.
\end{cases}
\end{equation}
Hence, by Table.4 in \cite{Chen2}, $\psi$ is an automorphism of $L_{1,1}$. The proof of the other cases is similar.
\end{proof}
In \cite{oubb} we are prove that any local qutomorphism of three dimensional $\omega$-Lie algebra over $\mathbb{C}$ is an automorphism. Since, any local automorphisms on finite dimensional complex simple Lie algebra is an automorphis or an anti-automprphis (see \cite{Cos}), then the following theorem hold.
\begin{theorem}
   Every local automorphism on finite-dimensional simple $\omega$-algebras over $\mathbb{C}$ is an automorphism or an anti-automorphism. 
\end{theorem}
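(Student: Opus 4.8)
The plan is to deduce the statement from the structure theory of finite-dimensional complex simple $\omega$-Lie algebras together with two results already available in the literature. Let $\mathfrak{g}$ be a finite-dimensional simple $\omega$-Lie algebra over $\mathbb{C}$. First I would invoke \cite[Theorem 2]{11}, quoted above, which says that a finite-dimensional semisimple $\omega$-Lie algebra is either an ordinary Lie algebra or has dimension at most $4$; in particular this dichotomy applies to $\mathfrak{g}$. If $\mathfrak{g}$ is not a Lie algebra, then $\dim\mathfrak{g}\le 4$; since there are no nontrivial $\omega$-Lie algebras in dimensions $1$ and $2$, and since by \cite[Proposition 7.2]{12} there are no non-Lie $4$-dimensional complex simple $\omega$-Lie algebras, we must have $\dim\mathfrak{g}=3$, and then by \cite[Proposition 7.1]{12} (equivalently, by the classification of $3$-dimensional nontrivial $\omega$-Lie algebras recalled above) $\mathfrak{g}$ is isomorphic to one of $A_\alpha$, $B$, or $C_\alpha$.

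The argument now splits into two cases. If $\mathfrak{g}$ is a genuine simple Lie algebra, then since $\mathbb{C}$ is an algebraically closed field of characteristic zero, the theorem of \cite{Cos} applies and every local automorphism of $\mathfrak{g}$ is an automorphism or an anti-automorphism. If instead $\mathfrak{g}$ is non-Lie, then by the reduction above $\mathfrak{g}$ is one of the three $3$-dimensional algebras $A_\alpha$, $B$, $C_\alpha$, and the desired conclusion --- in fact the stronger statement that every local automorphism is an automorphism --- was established in \cite{oubb}. Combining the two cases gives the claim.

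I do not anticipate a genuine difficulty here: the proof is essentially an assembly of the classification facts quoted above with the two external inputs \cite{Cos} and \cite{oubb}. The only point to watch is that the notion of simplicity being used matches the one under which \cite[Propositions 7.1 and 7.2]{12} are proved, so that the list $\{A_\alpha,B,C_\alpha\}$ really is exhaustive among non-Lie finite-dimensional complex simple $\omega$-Lie algebras; once this is confirmed the case split is complete and the result follows immediately.
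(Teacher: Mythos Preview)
Your proposal is correct and matches the paper's own argument: the paper likewise reduces to the two cases via \cite[Theorem 2]{11} and \cite[Propositions 7.1, 7.2]{12}, then invokes \cite{Cos} for the genuine Lie case and \cite{oubb} for the three-dimensional non-Lie case. Your write-up is in fact more explicit than the paper's about why the non-Lie case collapses to dimension $3$, but the strategy is identical.
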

\begin{theorem}
Every $2$-Local automorphism of the $\omega$-algebras $L_{1,3}$, $L_{1,4}$, $L_{1,7}$,  $L_{1,8},$  $L_{2,2},$ $L_{2,3}$,  $L_{2,4}$, $F_{1,\alpha}$ is an automorphism.
\end{theorem}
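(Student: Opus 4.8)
\noindent The plan is to reproduce, in the automorphism setting, the ``fixed reference element'' argument already used above for $2$-local derivations, with Table 3 of \cite{Chen2} replaced by Table 4 of \cite{Chen2}.

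\emph{Step 1 (rigidity of the automorphism groups).} For each of the eight algebras $L_{1,3}$, $L_{1,4}$, $L_{1,7}$, $L_{1,8}$, $L_{2,2}$, $L_{2,3}$, $L_{2,4}$, $F_{1,\alpha}$, I would first read off from Table 4 of \cite{Chen2} the matrix, in the basis $(e_1,e_2,e_3,e_4)$, of a general automorphism $\phi$. The crucial feature to extract --- exactly as for the derivation of $L_{1,4}$ displayed above --- is that all the free parameters of $\phi$ sit in the single column recording $\phi(e_3)$. Hence the evaluation map $\mathrm{Aut}(\mathfrak{g})\to\mathfrak{g}$, $\phi\mapsto\phi(e_3)$, is injective for each of these $\mathfrak{g}$: an automorphism of $\mathfrak{g}$ is completely determined by the image of $e_3$.

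\emph{Step 2 (the $2$-local manipulation).} Let $\Delta$ be an arbitrary $2$-local automorphism of one such $\mathfrak{g}$, and fix $x,y\in\mathfrak{g}$. By definition there are automorphisms $\phi_{x,e_3}$ and $\phi_{y,e_3}$ with
$$\Delta(x)=\phi_{x,e_3}(x),\qquad \Delta(y)=\phi_{y,e_3}(y),\qquad \Delta(e_3)=\phi_{x,e_3}(e_3)=\phi_{y,e_3}(e_3).$$
By the injectivity of Step 1, the equality $\phi_{x,e_3}(e_3)=\phi_{y,e_3}(e_3)$ forces $\phi_{x,e_3}=\phi_{y,e_3}$. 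Thus the automorphism $\phi_{x,e_3}$ does not depend on $x$; calling it $\phi$, we get $\Delta(x)=\phi(x)$ for every $x\in\mathfrak{g}$, i.e. $\Delta=\phi$ is an automorphism.

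The step I expect to be the main obstacle is Step 1: one has to verify, algebra by algebra, that the automorphism group is ``thin'' enough for $e_3$ to separate its elements (for any algebra where a single vector does not suffice, one would instead fix a pair such as $\{e_1,e_3\}$ and pin down both $\Delta(e_1)$ and $\Delta(e_3)$, after which the argument of Step 2 goes through verbatim). Once that rigidity is in hand, Step 2 is the purely formal two-element computation above, word for word the same as in the $2$-local derivation proof; I expect no surprises beyond the bookkeeping, since these eight algebras are precisely the ones for which the analogous statement for $2$-local derivations was established by this method.
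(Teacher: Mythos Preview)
Your approach is the same as the paper's: pick a reference basis vector on which the automorphism group acts freely (so that evaluation is injective), then run the two-element pinning argument. The paper carries out the model case for $L_{2,3}$ rather than $L_{1,4}$, and there the free parameters of a general automorphism sit in the \emph{fourth} column, so the correct reference vector is $e_4$, not $e_3$; your expectation that $e_3$ works uniformly, borrowed from the derivation table, does not hold for every algebra on the list. This is exactly the bookkeeping obstacle you flagged in Step~1, and your proposed remedy (adjust the reference element after inspecting Table~4) is what is needed---just be aware that the adjustment is required already in the paper's own worked example.
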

\begin{proof}
The proof of the theorem will be presented for $L_{2,3}$, similar demonstrations can be carried out for the remaining cases.\\
Let $\psi$ be an arbitrary $2$local automorphism of $L_{2,3}$,then by definition, for every element $x \in L_{2,3}$, there exists a derivation $\psi_{x,e_4}$ of $L_{2,3}$ such that 
$$\psi(x)=\psi_{x,e_4}(x)\quad and \quad \psi(e_4)=\psi_{x,e_4}(e_4).$$
By Table.4 in \cite{Chen2}, the matrix $A_{x,e_4}$ of the automorphism $\psi_{x,e_4}$ has the following matrix form
\begin{equation}\label{23}
A_{x,e_4}=\begin{pmatrix}
1&0&0&a_{x,e_4}\\
0&1&0&-a_{x,e_4}\\
0&0&1&a_{x,e_4}\\
0&0&0&b_{x,e_4}
\end{pmatrix} \quad b_{x,e_4} \neq 0,
\end{equation}
Let $y$ be an arbitrary element in $L_{2,3}$. Then there existe a automorphism $\psi_{y,e_4}$ of $L_{2,3}$ such that 
$$\psi(y)=\psi_{y,e_4}(y)\quad and \quad \psi(e_4)=\psi_{y,e_4}(e_4).$$ 
By Table.4 in \cite{Chen2}, the matrix $A_{y,e_4}$ of the derivation $\psi_{y,e_4}$ has the following matrix form
\begin{equation}\label{23'}
A_{y,e_4}=\begin{pmatrix}
1&0&0&a_{y,e_4}\\
0&1&0&-a_{y,e_4}\\
0&0&1&a_{y,e_4}\\
0&0&0&b_{y,e_4}
\end{pmatrix} \quad b_{x,e_4} \neq 0,
\end{equation}
Since $\psi(e_4)=\psi_{x,e_4}(e_4)=\psi_{y,e_4}(e_4)$, we have
$$a_{x,e_4}=a_{y,e_4} \quad and \quad b_{x,e_4}=b_{y,e_4}\neq 0.$$
That it
$$\psi_{x,e_3}=\psi_{y,e_3}$$
Therefore, for any element $x$ of the algebra  $L_{2,3}$
$$\psi(x)=\psi_{y,e_4}(x),$$
that it $\psi_{y,e_4}$ does not depend on $x$. Hence, $\psi$ is an automorphism of $L_{2,3}$.
\end{proof}
In \cite{oubb} we are prove that any 2-local qutomorphism of three dimensional $\omega$-Lie algebra over $\mathbb{C}$ is an automorphism. Since, any 2-local automorphisms on finite dimensional complex simple Lie algebra is an automorphis (see \cite{loc}), then the following theorem hold.
\begin{theorem}
 Every 2-local automorphism on finite-dimensional simple $\omega$-algebras over $\mathbb{C}$ is an automorphism.     
\end{theorem}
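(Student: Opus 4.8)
The plan is to reduce the statement to two cases that are already settled, using the structure theory of finite-dimensional complex simple $\omega$-Lie algebras to rule out any genuinely new situation. Let $\mathfrak{g}$ be a finite-dimensional simple $\omega$-Lie algebra over $\mathbb{C}$, with skew-symmetric form $\omega$, and let $\Delta$ be a $2$-local automorphism of $\mathfrak{g}$. First I would distinguish the cases $\omega=0$ and $\omega\neq 0$, i.e.\ whether $\mathfrak{g}$ is a trivial $\omega$-Lie algebra (an ordinary Lie algebra) or a nontrivial one.

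In the case $\omega=0$, the algebra $\mathfrak{g}$ is just a finite-dimensional complex simple Lie algebra, and the assertion is exactly the result of \cite{loc}: every $2$-local automorphism of a finite-dimensional simple Lie algebra over an algebraically closed field of characteristic zero is an automorphism. Hence $\Delta$ is an automorphism, and nothing further is needed here.

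In the case $\omega\neq 0$, the algebra $\mathfrak{g}$ is a nontrivial $\omega$-Lie algebra; being simple it is in particular semisimple, so by \cite{11} we have $\dim\mathfrak{g}\leq 4$. Since there are no nontrivial $\omega$-Lie algebras in dimensions $1$ and $2$, and since by \cite{12} there exist no non-Lie $4$-dimensional complex simple $\omega$-Lie algebras, we conclude $\dim\mathfrak{g}=3$. Then the Chen–Liu–Zhang classification \cite{Chen1} together with \cite{12} forces $\mathfrak{g}$ to be isomorphic to one of $A_\alpha$, $B$, or $C_\alpha$. For every $3$-dimensional $\omega$-Lie algebra over $\mathbb{C}$ it was proved in \cite{oubb} that each $2$-local automorphism is an automorphism, so $\Delta$ is again an automorphism. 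Combining the two cases gives the theorem.

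The only real obstacle is bookkeeping rather than computation: one has to check carefully that simplicity indeed implies semisimplicity in the relevant sense, that the dimension bound and the nonexistence results genuinely pin a non-Lie simple $\omega$-Lie algebra down to dimension $3$, and that the cited theorems for the $3$-dimensional algebras and for simple Lie algebras together cover precisely the list of algebras that can occur. Once these facts are assembled, no new estimate or construction is required.
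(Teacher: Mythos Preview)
Your proposal is correct and follows essentially the same approach as the paper: the paper's proof is the one-line observation that the $3$-dimensional case is covered by \cite{oubb} and the Lie-algebra case by \cite{loc}, with the structure results from \cite{11} and \cite{12} (stated earlier in the paper) ensuring these two cases exhaust all finite-dimensional complex simple $\omega$-Lie algebras. You have simply made the case split and the dimension reduction more explicit than the paper does.
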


\section{Biderivations of $\omega$-Lie algebras}
In this section, we provide characterizations of biderivations for $4$-dimensional $\omega$-Lie algebras. We furnish detailed proofs for some cases, while omitting the proofs for the remaining cases due to the similarity of arguments.\\

We commence by revisiting the definition of a biderivation in the context of an arbitrary Lie algebra.
 \begin{definition}
 \label{def5}
 Let $(\mathfrak{g},[,])$ be an arbitrary algebra. A bilinear map $\delta : \mathfrak{g} \times \mathfrak{g} \rightarrow \mathfrak{g}$ is called a biderivation on $\mathfrak{g}$ if
 $$\delta([x,y],z)=[x,\delta(y,z)]+[\delta(x,z),y],$$
 $$\delta(x,[y,z])=[y,\delta(x,z)]+[\delta(x,y),z],$$
 for all $x,y,z \in \mathfrak{g}$. Denote by $BDer(\mathfrak{g})$ the set of all biderivations on $\mathfrak{g}$ which is clearly a vector space.
 \end{definition}
  
  Let $BDer(\mathfrak{g})$ represent the collection of all biderivations defined on $\mathfrak{g}$. It is evident that this set forms a vector space.\\
  A biderivation $\delta \in BDer(\mathfrak{g})$ is termed symmetric if $\delta(x,y)=\delta(y,x)$ holds for all $x,y \in \mathfrak{g}$; conversely, it is designated as skew-symmetric if $\delta(x,y)=-\delta(y,x)$ for all $x,y \in \mathfrak{g}$. We denote the subspaces of all symmetric biderivations and all skew-symmetric biderivations on $\mathfrak{g}$ as $BDer_+(\mathfrak{g})$ and $BDer_-(\mathfrak{g})$, respectively.
  \begin{definition}
  Let $\mathfrak{g}$ be a finite-dimensional $\omega$-Lie algebra. A bideivation $\delta \in BDer(\mathfrak{g}$ is called $\omega$-bideivation of $\mathfrak{g}$ if the linear maps $\delta(x,.)$ and $\delta(.,x)$ are a $\omega$-derivation. That is,
  \begin{eqnarray*}
  \omega(\delta(x,y),z)&=&\omega(y,\delta(x,z)),\\
  \omega(\delta(x,y),z)&=&\omega(x,\delta(z,y)).
  \end{eqnarray*}
  for all $x,y,z \in \mathfrak{g}$
  \end{definition}
  We write $BDer_\omega(\mathfrak{g})$ for the set of all $\omega$-biderivations of $\mathfrak{g}$. It is easy to see that $BDer_\omega(\mathfrak{g}) \subseteq BDer(\mathfrak{g})$.

For any $\delta \in BDer(\mathfrak{g})$, we define two bilinear maps by 
$$\delta^+(x,y)=\frac{1}{2}(\delta(x,y)+\delta(y,x)), \hspace{0.3 cm}\delta^-(x,y)=\frac{1}{2}(\delta(x,y)-\delta(y,x))$$
It is easy to see $\delta^+ \in BDer_+(\mathfrak{g})$ and $\delta^- \in BDer_-(\mathfrak{g})$. Since $\delta =\delta^+ + \delta^-$ it follows that
$$BDer(\mathfrak{g})=BDer_+(\mathfrak{g}) \oplus BDer_-(\mathfrak{g})$$
To characterize $BDer(\mathfrak{g})$, we only need to characterize $BDer_+(\mathfrak{g})$ and $BDer_-(\mathfrak{g})$.\\

Now, let $\delta$ be a  biderivation on $\omega$-Lie algebra $\mathfrak{g}$ and $x,y \in \mathfrak{g}$, such that $x=\sum_{i=1}^{4}x_{i}e_{i}$ and $y=\sum_{i=1}^{4} y_{i}e_{i}$. Then, by the bilinearity of $\delta$, we obtain, 
\begin{equation}\label{bid}
\delta(x,y)=\sum_{i=1}^{4} \sum_{j=1}^{4} x_{i} y_{j} \delta(e_i,e_j)=\sum_{i=1}^{4} \sum_{j=1}^{4} x_{i} y_{j} \delta_{e_i}(e_j).
\end{equation}
\begin{theorem}
The $4$-dimensional $\omega$-Lie algebra over $\mathbb{C}$ has no nontrivial skew-symmetric biderivations.
\end{theorem}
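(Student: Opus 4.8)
The plan is to reduce to the Chen--Zhang classification \cite{Chen0} of $4$-dimensional nontrivial $\omega$-Lie algebras and then argue case by case, using the explicit description of derivations from \cite{Chen2} (the same tables already invoked in Sections 4 and 5) together with one structural shortcut.

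The shortcut: if $\delta$ is \emph{skew-symmetric}, the two identities in Definition \ref{def5} are equivalent. Indeed, replacing $(x,y,z)$ by $(z,x,y)$ in the second identity and using $\delta(u,v)=-\delta(v,u)$ and the bilinearity of $[\,,\,]$ yields the first. Hence a skew-symmetric $\delta$ is a biderivation precisely when, for each fixed $x$, the partial map $\delta(x,\cdot)\colon\mathfrak{g}\to\mathfrak{g}$ is a derivation of $\mathfrak{g}$. Fixing an algebra from the classification with basis $e_1,\dots,e_4$, I can therefore write $\delta(e_j,\cdot)=D_j$, where each $D_j$ is a derivation --- so each $D_j$ has the explicit parametric matrix form listed in Table 3 of \cite{Chen2} --- and a general skew-symmetric biderivation is then recovered by bilinearity as $\delta(x,y)=\sum_j x_j\,D_j(y)$.

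Next I would impose skew-symmetry on basis vectors: $D_i(e_j)=\delta(e_i,e_j)=-\delta(e_j,e_i)=-D_j(e_i)$ for all $i,j$, together with $D_i(e_i)=0$. Reading off the entries of $D_1,\dots,D_4$ from \cite{Chen2}, these relations become a homogeneous linear system in the (finitely many) scalar parameters of the $D_j$'s; the task is to show that the only solution is trivial, i.e.\ $\delta=0$. One then runs this for every isomorphism type in the list $L_{1,1},\dots,L_{1,8}$, $L_{2,1},\dots,L_{2,4}$, $E_{1,\alpha}$, $F_{1,\alpha}$, $G_{1,\alpha}$, $H_{1,\alpha}$, $\tilde{A_\alpha}$, $\tilde{B}$, $\tilde{C_\alpha}$, keeping $\alpha$ symbolic and checking that no excluded or special value of $\alpha$ reintroduces a nonzero solution. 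As in the local-derivation sections, I would present one representative case in full and state that the rest follow from the same computation.

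The only real difficulty is organizational: about twenty algebras, each yielding its own small linear system, plus the bit of care needed for the one-parameter families. Conceptually there is nothing deep once the derivation tables of \cite{Chen2} are available; what makes the statement plausible is that --- unlike the Lie-algebra situation, where $\delta(x,y)=\lambda[x,y]$ is always a skew-symmetric biderivation --- here the bracket itself fails the biderivation identity, the two sides differing by exactly the $\omega$-term $\omega(x,y)z+\omega(y,z)x+\omega(z,x)y$, so there is no obvious nonzero candidate to eliminate.
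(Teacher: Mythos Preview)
Your proposal is correct and follows essentially the same approach as the paper: write each partial map $\delta(e_i,\cdot)$ as a derivation using the parametric matrices from \cite{Chen2}, impose the skew-symmetry constraints $D_i(e_j)=-D_j(e_i)$ and $D_i(e_i)=0$, solve the resulting homogeneous linear system to force all parameters to vanish, and present one representative case in full (the paper chooses $L_{1,6}$) while declaring the rest analogous. Your explicit observation that the two biderivation identities collapse to one under skew-symmetry is a tidy clarification, but it does not alter the underlying computation.
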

\begin{proof} We shall establish the theorem for $L_{1,6}$; the remaining cases can be demonstrated analogously.
 Let $\delta^-$ be an arbitrary skew-symmetric biderivation on $L_{1,6}$. By Table.4 in  \cite{Chen2}, the matrix $D_{e_i}$ of $\delta_{e_i}$, for $i=1,2,3,4$ is of the form 
$$D_{e_i}=\begin{pmatrix}
-a_i&0&c_i&-2a_i\\
0&0&-c_i&a_i\\
0&0&b_i&0\\
0&0&c_i&-a_i 
\end{pmatrix}$$
Since $ \delta$ is skew-symmetric, then, the equalities $\delta(e_i,e_i)=0$  $i=1,2,3,4$ implies that
$$D_{e_1}=\begin{pmatrix}
0&0&c_1&0\\
0&0&-c_1&0\\
0&0&b_1&0\\
0&0&c_1&0 
\end{pmatrix}, \quad D_{e_2}=\begin{pmatrix}
-a_2&0&c_2&-2a_2\\
0&0&-c_2&a_2\\
0&0&b_2&0\\
0&0&c_2&-a_2 
\end{pmatrix}$$

$$D_{e_3}=\begin{pmatrix}
-a_3&0&0&-2a_3\\
0&0&0&a_3\\
0&0&0&0\\
0&0&0&-a_3 
\end{pmatrix},\quad D_{e_4}=\begin{pmatrix}
0&0&c_4&0\\
0&0&-c_4&0\\
0&0&b_4&0\\
0&0&c_4&0 
\end{pmatrix}.$$
The equalities $\delta^-(e_1,e_i)=-\delta^-(e_i,e_1)$ for $i=2,3,4$ implies that 
$$D_{e_1}=\begin{pmatrix}
0&0&0&0\\
0&0&0&0\\
0&0&0&0\\
0&0&0&0 
\end{pmatrix}, \quad D_{e_2}=\begin{pmatrix}
0&0&c_2&0\\
0&0&-c_2&0\\
0&0&b_2&0\\
0&0&c_2&0 
\end{pmatrix}$$

$$D_{e_3}=\begin{pmatrix}
0&0&0&0\\
0&0&0&0\\
0&0&0&0\\
0&0&0&0 
\end{pmatrix},\quad D_{e_4}=\begin{pmatrix}
0&0&c_4&0\\
0&0&-c_4&0\\
0&0&b_4&0\\
0&0&c_4&0 
\end{pmatrix}.$$
Finally, from the equalities $\delta^-(e_3,e_i)=-\delta^-(e_i,e_3)$ for $i=2,4$ we get
$$D_{e_1}=\begin{pmatrix}
0&0&0&0\\
0&0&0&0\\
0&0&0&0\\
0&0&0&0 
\end{pmatrix}, \quad D_{e_2}=\begin{pmatrix}
0&0&0&0\\
0&0&0&0\\
0&0&0&0\\
0&0&0&0 
\end{pmatrix}$$

$$D_{e_3}=\begin{pmatrix}
0&0&0&0\\
0&0&0&0\\
0&0&0&0\\
0&0&0&0 
\end{pmatrix},\quad D_{e_4}=\begin{pmatrix}
0&0&0&0\\
0&0&0&0\\
0&0&0&0\\
0&0&0&0 
\end{pmatrix}.$$
Let $x=x_1e_1+x_2e_2+x_3e_3+x_4e_4$, $y=y_1e_1+y_2e_2+y_3e_3+y_4e_4$, then
$$\delta^-(x,y)=\sum_{i=1}^4x_i\delta^-(e_i,y)=\sum_{i=1}^4x_iD_{e_i}\bar{y}=0$$. Which ends the proof.
\end{proof}
\begin{theorem}
$\delta^+$ is symmetric biderivation on $L_{1,1}$ if and only if there exist the elements $a,b,c,d,f,g,h,m,n \in \mathbb{C}$ such that
\begin{eqnarray*}
\delta^+(x,y)&=&\Big(x_3(ay_3+by_4)+x_4(y_3+gy_4)\Big)e_1-\Big(x_3(ay_3+by_4+by_3+gy_4)\Big)e_2\\
&&+\Big(x_3(cy_3+dy_4)+x_4(dy_3+my_4)\Big)e_3+\Big(x_3(hy_3+fy_4)+x_4(fy_3+ny_4)\Big)e_4
\end{eqnarray*}
for all $x=\sum_{i=1}^4x_ie_i, \, y=\sum_{i=1}^4y_je_j \in L_{1,1}$.
\end{theorem}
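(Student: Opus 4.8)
The plan is to repeat the bookkeeping used for the skew-symmetric case, now driven by the symmetry relations $\delta^+(e_i,e_j)=\delta^+(e_j,e_i)$. I would first record the structural fact that a bilinear map $\delta$ on $L_{1,1}$ is a biderivation exactly when, for each fixed index $i$, the linear map $\delta(e_i,\cdot)$ is a derivation (this is the second identity of Definition \ref{def5}) and, for each fixed $j$, the linear map $\delta(\cdot,e_j)$ is a derivation (the first identity); both statements reduce to basis vectors by bilinearity. By (\ref{form11}) every derivation of $L_{1,1}$ has a matrix whose first two columns vanish, so writing $D_{e_i}$ for the matrix of the derivation $\delta^+(e_i,\cdot)$ we may set
$$D_{e_i}=\begin{pmatrix}0&0&a_i&b_i\\0&0&-a_i&-b_i\\0&0&c_i&d_i\\0&0&h_i&f_i\end{pmatrix},\qquad i=1,2,3,4.$$

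The symmetry then collapses almost everything. Since the first two columns of each $D_{e_i}$ are zero we have $\delta^+(e_i,e_1)=\delta^+(e_i,e_2)=0$ for all $i$, and symmetry gives $\delta^+(e_1,e_j)=\delta^+(e_2,e_j)=0$ for all $j$; hence $D_{e_1}=D_{e_2}=0$ and $\delta^+$ is supported on the pairs $(e_i,e_j)$ with $i,j\in\{3,4\}$. The only surviving symmetry relation is $\delta^+(e_3,e_4)=\delta^+(e_4,e_3)$, which, on comparing the fourth column of $D_{e_3}$ with the third column of $D_{e_4}$, reads $b_3=a_4$, $d_3=c_4$, $f_3=h_4$. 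Conversely, given any two derivation matrices $D_{e_3},D_{e_4}$ obeying these three relations (and $D_{e_1}=D_{e_2}=0$), the associated bilinear map is symmetric by construction and is a biderivation: each $\delta^+(e_i,\cdot)$ is a derivation by the choice of $D_{e_i}$, and, because $D_{e_1}=D_{e_2}=0$, each matrix of $\delta^+(\cdot,e_j)$ again has vanishing first two columns and the shape required by (\ref{form11}). This leaves exactly nine free scalars, which I would rename $a,b,c,d,f,g,h,m,n$.

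It remains to substitute into (\ref{bid}). With $x=\sum_i x_ie_i$, $y=\sum_j y_je_j$ and $\delta^+(e_i,e_j)=0$ unless $i,j\in\{3,4\}$, one obtains
$$\delta^+(x,y)=x_3y_3\,\delta^+(e_3,e_3)+x_3y_4\,\delta^+(e_3,e_4)+x_4y_3\,\delta^+(e_4,e_3)+x_4y_4\,\delta^+(e_4,e_4),$$
and expanding each term through the appropriate column of $D_{e_3}$ or $D_{e_4}$, inserting the identifications $b_3=a_4$, $d_3=c_4$, $f_3=h_4$, and collecting the coefficients of $e_1,e_2,e_3,e_4$ yields the displayed formula (the $e_2$-component is forced to be the negative of the $e_1$-component). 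I do not expect a genuine obstacle here: the argument is the reduction to derivation matrices via \cite{Chen2} followed by careful bookkeeping, and the only point needing care is keeping the three symmetry identifications straight when assembling the final coefficients.
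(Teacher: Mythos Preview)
Your proposal is correct and follows essentially the same approach as the paper: reduce to the derivation matrix form (\ref{form11}) for each $\delta^+(e_i,\cdot)$, use the symmetry relations to force $D_{e_1}=D_{e_2}=0$ and the three identifications $b_3=a_4$, $d_3=c_4$, $f_3=h_4$, then expand via (\ref{bid}). If anything, you are slightly more careful than the paper in explicitly addressing the converse direction (verifying that the resulting bilinear map is indeed a biderivation), which the paper leaves implicit.
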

\begin{proof}
Let $\delta^+$ be an arbitrary symmetric derivation on $L_{1,1}$. By Table.4 in  \cite{Chen2}, the matrix $D_{e_i}$ of $\delta_{e_i}$, for $i=1,2,3,4$ is of the form 
$$D_{e_i}=\begin{pmatrix}
0&0&a_i&b_i\\
0&0&-a_i&-b_i\\
0&0&c_i&d_i\\
0&0&h_i&f_i 
\end{pmatrix}$$
Since $\delta^+(e_1,e_i)=\delta^+(e_i,e_1)$ for $i=2,3,4$ Then, 
$$D_{e_1}=\begin{pmatrix}
0&0&0&0\\
0&0&0&0\\
0&0&0&0\\
0&0&0&0 
\end{pmatrix}, \quad D_{e_2}=\begin{pmatrix}
0&0&0&0\\
0&0&0&0\\
0&0&0&0\\
0&0&0&0 
\end{pmatrix}$$
From the equality $\delta^+(e_3,e_4)=\delta^+(e_4,e_3)$ we deduce
$$b_3=a_4, \quad d_3=c_4, \quad f_3=h_4.$$
Set $b_3=a_4=b,\, d_3=c_4=d, \, f_3=h_4=f, \, g=b_4,\, m=d_4, \, n=f_4$. Then,
$$D_{e_1}=\begin{pmatrix}
0&0&0&0\\
0&0&0&0\\
0&0&0&0\\
0&0&0&0 
\end{pmatrix}, \quad D_{e_2}=\begin{pmatrix}
0&0&0&0\\
0&0&0&0\\
0&0&0&0\\
0&0&0&0 
\end{pmatrix}\\
D_{e_3}=\begin{pmatrix}
0&0&a&b\\
0&0&-a&-b\\
0&0&c&d\\
0&0&h&f 
\end{pmatrix},\quad D_{e_4}=\begin{pmatrix}
0&0&b&g\\
0&0&-b&-g\\
0&0&d&m\\
0&0&f&n 
\end{pmatrix}.$$
Therefor, for all $x=\sum_{i=1}^4x_ie_i$, $y=\sum_{j=1}^4y_je_j\in L_{1,1}$ we have,
\begin{eqnarray*}
\delta^+(x,y)\, =\, \sum_{i=1}^4x_i\delta^+_{e_i}(y)&=&\sum_{i=1}^4x_iD_{e_i}\bar{y}\\
&=&\Big(x_3(ay_3+by_4)+x_4(y_3+gy_4)\Big)e_1-\Big(x_3(ay_3+by_4+by_3+gy_4)\Big)e_2\\
&&+\Big(x_3(cy_3+dy_4)+x_4(dy_3+my_4)\Big)e_3+\Big(x_3(hy_3+fy_4)+x_4(fy_3+ny_4)\Big)e_4.
\end{eqnarray*}
\end{proof}
Similar argements can be applied to the remaining $4$-dimensional $\omega$-Lie algebras, so we summarize the result in the following Table $1$, without detailed proofs
\begin{center}
Table $1$ : Symmetric biderivation of $4$-dimensional $\omega$-Lie algebras.\\
\begin{tabular}{|c|c|}\hline
$\mathfrak{g}$& elements in $Bder_+(\mathfrak{g})$ \\ \hline
$L_{1,1}$&$\Big(x_3(ay_3+by_4)+x_4(y_3+gy_4)\Big)e_1-\Big(x_3(ay_3+by_4+by_3+gy_4)\Big)e_2$\\&$+\Big(x_3(cy_3+dy_4)+x_4(dy_3+my_4)\Big)e_3+\Big(x_3(hy_3+fy_4)+x_4(fy_3+ny_4)\Big)e_4$\\ &$a,b,c,d,f,g,h,m,n \in \mathbb{C}$\\\hline
$L_{1,2}$& $\Big(x_3(ay_3+by_4)+bx_4y_3\Big)e_1+\Big(x_3((b-a)y_3-by_4)-bx_4y_3\Big)e_2$\\&$+\Big(cx_3y_3\Big) e_3+\Big(x_3(dy_3+cy_4)+cx_4y_3\Big)e_4,\,  a,b,c,d \in \mathbb{C}$\\\hline
$L_{1,3}$&$ax_3y_3e_3+bx_3y_3e_4, $\\&$ \,a,b \in \mathbb{C}$\\\hline 
$L_{1,4}$&$ax_3y_3e_1-ax_3y_3e_2+ax_3y_3e_3+bx_3y_3e_4,$\\&$ \,a,b \in \mathbb{C} $\\\hline
$L_{1,5}$&$(ax_3y_3-2dx_4y_4)e_1+(-ax_3y_3+dx_4y_4)e_2+bx_3y_3e_3+cx_4y_4e_4,\,$\\&$ a,b,c,d \in \mathbb{C} $\\ \hline
$L_{1,6}$& $ax_3y_3e_1-ax_3y_3e_2+bx_3y_3e_3+ax_3y_4e_4,$\\&$\,a,b \in \mathbb{C}$\\ \hline
$L_{1,7}$& $\Big(x_3(ay_3+by_4)+bx_4(y_3+2y_4)\Big)e_1+\Big(x_3((b-a)y_3-\frac{b}{2}y_4)+bx_4(y_3-y_4)\Big)e_2,$\\&$ \, a,b \in \mathbb{C}$\\ \hline
$L_{1,8}$& $ax_3y_3e_1-ax_3y_3e_2+ax_3y_3e_4,\,$\\&$ a \in \mathbb{C}$\\ \hline
$L_{2,1}$&$ax_4y_4e_3+bx_4y_4e_4,$\\&$ \,a,b \in \mathbb{C}$\\ \hline
$L_{2,2}$&$0$\\ \hline
$L_{2,3}$&$ax_4y_4e_1-ax_4y_4e_2+ax_4y_4e_3+bx_4y_4e_4,$\\&$\,a,b \in \mathbb{C}$ \\ \hline
$ L_{2,4}$ &$0$ \\ \hline
$E_{1,\alpha}$ \\($\alpha \neq 0,1$)&$\Big(x_3(ay_3+\alpha(\alpha+1)y_4)+x_4(\alpha(\alpha+1)y_3-(\alpha+1)cy_4)\Big)e_1+\Big(x_3((b-\alpha)y_3-\alpha y_4)+x_4(-\alpha y_3+cy_4)\Big)e_2$\\&$+\Big(bx_3y_3+dx_4y_4\Big)e_4,\, a,b,c,d \in \mathbb{C}$ \\ \hline
$F_{1,\alpha}$ \\($\alpha \neq 0,1$)& $\alpha a x_3y_3e_1-\alpha ax_3y_3e_2+bx_3y_3e_3+ax_3y_3e_4,$\\&$ \,a,b \in \mathbb{C}$ \\ \hline
$ G_{1, \alpha}$&$0$ \\ \hline
$,H_{1,\alpha}$&$0$\\ \hline
$\tilde{A_\alpha}$& $ax_4y_4e_4,\, a \in \mathbb{C}$ \\ \hline
$\tilde{B}$&$ax_2y_2e_3+bx_4y_4e_4,$\\&$\,a,b \in \mathbb{C}$\\ \hline
$\tilde{C_\alpha}$ \\ ($\alpha \neq 0,-1,1$)&$ax_4y_4e_4,$\\&$ \, a \in \mathbb{C}$ \\ \hline
$\tilde{C_1}$&$\Big(x_2(ay_2+cy_3)+x_3(cy_2+by_3)\Big)e_2+\Big(x_2(dy_2-ay_3)-x_3(ay_2+cy_3)\Big)e_3+fx_4y_4e_4,$\\&$\,a,b,c,d,f \in \mathbb{C}$\\ \hline
\end{tabular}
\end{center}
It is know that (see \cite{tang}) $\delta$ is a biderivation on complex finite-dimensional simple Lie algebra $\mathfrak{g}$ if and only if there exists $\lambda \in \mathbb{C}$ such that $\delta(x,y)=\lambda[x,y]$, for all $x,y \in \mathfrak{g}$. In \cite{oubb} we are show that any biderivations of $A_\alpha (\alpha \in \mathbb{C})$, $B$, $C_\alpha (0,-1\neq \alpha \in \mathbb{C}$ is the zero mapping. Then, we have the following characterization of biderivation of biderivation of finite-dimensional $\omega$-Lie algebra over $\mathbb{C}$.
\begin{theorem}
    Let $\mathfrak{g}$ be finite-dimensional simple $\omega$-Lie algebra over $\mathbb{C}$. Then, $\delta:\mathfrak{g}\times \mathfrak{g}\to \mathfrak{g}$ is a biderivation if and only if there exists $\lambda\in \mathbb{C}$ such that
    $$\delta(x,y)=\lambda[x,y],\quad \forall x,y \in \mathfrak{g}.$$
\end{theorem}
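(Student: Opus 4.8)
The plan is to prove this by a dichotomy based on the classification of finite-dimensional complex simple $\omega$-Lie algebras, reducing in each branch to a result already available. Since a simple $\omega$-Lie algebra is in particular semisimple, Theorem 2 of \cite{11} applies and tells us that $\mathfrak{g}$ is either a trivial $\omega$-Lie algebra (an ordinary Lie algebra) or has dimension at most $4$. In the non-Lie case, there are no nontrivial $\omega$-Lie algebras in dimensions $1$ and $2$, and by Proposition 7.2 of \cite{12} there is no non-Lie $4$-dimensional complex simple $\omega$-Lie algebra, so $\dim\mathfrak{g}=3$; then Proposition 7.1 of \cite{12} forces $\mathfrak{g}$ to be isomorphic to one of $A_\alpha$ $(\alpha\in\mathbb{C})$, $B$, or $C_\alpha$ $(0,-1\neq\alpha\in\mathbb{C})$.

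With this reduction in hand, the forward implication splits into two cases. If $\mathfrak{g}$ is a Lie algebra, it is finite-dimensional complex simple, and the conclusion is exactly Theorem \ref{simple} (the classical description recalled above from \cite{tang}): every biderivation has the form $\lambda[\cdot,\cdot]$. If instead $\mathfrak{g}\cong A_\alpha,\,B$, or $C_\alpha$, then by \cite{oubb} the space $BDer(\mathfrak{g})$ is trivial, so any biderivation equals $0=0\cdot[\cdot,\cdot]$ and we take $\lambda=0$. Either way there is a $\lambda\in\mathbb{C}$ with $\delta=\lambda[\cdot,\cdot]$.

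For the converse one computes, using the $\omega$-Jacobi identity, that $\delta(x,y):=\lambda[x,y]$ satisfies
$$\delta([x,y],z)-[x,\delta(y,z)]-[\delta(x,z),y]=\lambda\big(\omega(x,y)z+\omega(y,z)x+\omega(z,x)y\big),$$
and likewise for the second identity; hence $\lambda[\cdot,\cdot]$ is a biderivation whenever $\mathfrak{g}$ is a Lie algebra (for arbitrary $\lambda$) or whenever $\lambda=0$ — which are precisely the situations produced in the forward direction. Putting the pieces together proves the theorem.

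The proof has no genuinely hard step: it is bookkeeping over the classification and rests entirely on the cited inputs — Propositions 7.1 and 7.2 of \cite{12}, the vanishing of $BDer$ for the three $3$-dimensional simple $\omega$-Lie algebras from \cite{oubb}, and the classical result on biderivations of simple Lie algebras. The only point deserving care is the converse: for $\omega\neq 0$ and $\lambda\neq 0$ the map $\lambda[\cdot,\cdot]$ is \emph{not} a biderivation, so the statement is best read as the identification $BDer(\mathfrak{g})=\{\lambda[\cdot,\cdot]:\lambda\in\mathbb{C}\}$ in the Lie case and $BDer(\mathfrak{g})=\{0\}$ in the non-Lie case, rather than as a literal biconditional applied to an arbitrary candidate $\delta$.
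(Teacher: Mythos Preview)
Your approach is essentially identical to the paper's: the theorem is stated immediately after a paragraph that invokes \cite{tang} for the Lie case and \cite{oubb} for the three-dimensional non-Lie simple $\omega$-Lie algebras, with the dimension reduction coming from \cite{11} and \cite{12}; no separate proof environment is given. Your observation that the converse fails literally for $\omega\neq 0$ and $\lambda\neq 0$ is a valid point the paper does not address --- the statement is really $BDer(\mathfrak{g})=\mathbb{C}\,[\cdot,\cdot]$ in the Lie case and $BDer(\mathfrak{g})=\{0\}$ otherwise, exactly as you reformulate it.
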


We concluding the results on the paper we present the foollowing theorem.
\begin{theorem}
$BDer_\omega(\mathfrak{g})=BDer(\mathfrak{g})$ for any $4$-dimensional nontrivial $\omega$-Lie algebra $\mathfrak{g}$, except for $\mathfrak{g}=L_{1,6}$ and $L_{1,8}$.
\end{theorem}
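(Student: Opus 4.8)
The plan is to turn the statement into a finite, algebra-by-algebra computation resting on two facts already established in the paper: that $BDer(\mathfrak{g})=BDer_+(\mathfrak{g})$ for every $4$-dimensional nontrivial $\omega$-Lie algebra $\mathfrak{g}$ (there are no nontrivial skew-symmetric biderivations), and that the symmetric biderivations of each such $\mathfrak{g}$ are exactly those displayed in Table~$1$. Since the two defining identities of an $\omega$-biderivation,
\[
\omega(\delta(x,y),z)=\omega(y,\delta(x,z)),\qquad \omega(\delta(x,y),z)=\omega(x,\delta(z,y)),
\]
are trilinear in $(x,y,z)$, it suffices to test them on basis triples $(e_i,e_j,e_k)$; and in each case the skew form $\omega$ is supported on only one or two pairs of basis vectors (for instance $\omega(e_1,e_2)=1$, or $\omega(e_1,e_2)=\omega(e_4,e_1)=1$, or $\omega(e_2,e_3)=-1$), so the identities reduce to a short list of scalar equations in the parameters appearing in $\delta^+$.

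For the inclusion $BDer(\mathfrak{g})\subseteq BDer_\omega(\mathfrak{g})$ in the non-exceptional cases I would run through the algebras of the classification and verify these scalar equations. A convenient shortcut, when it applies: whenever $Der(\mathfrak{g})=Der_\omega(\mathfrak{g})$ — a fact one reads off from the derivation and $\omega$-derivation data of \cite{Chen2} — the inclusion is immediate, because for any biderivation $\delta$ the biderivation axioms say precisely that the partial maps $\delta(x,\cdot)$ and $\delta(\cdot,y)$ are derivations, hence $\omega$-derivations. For the remaining algebras one computes directly, and the recurring reason the check goes through is that $\delta^+$ takes its values in a subspace that $\omega$ annihilates in the relevant slot: e.g.\ for $L_{1,3}$ one has $\delta^+(x,y)\in\mathrm{span}\{e_3,e_4\}$ with $\omega(e_3,-)=\omega(e_4,-)=0$, so both identities read $0=0$; the cases with $BDer_+(\mathfrak{g})=0$ (such as $L_{2,2},L_{2,4},G_{1,\alpha},H_{1,\alpha}$) are vacuous. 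The genuinely hands-on cases are the few in which $\delta^+$ does have components along $\omega$-active basis vectors, e.g.\ $L_{1,7}$, $E_{1,\alpha}$ and $\tilde C_1$, where one must check that the scalar equations hold identically in the parameters.

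For the two exceptional algebras I would exhibit an explicit biderivation failing the first identity. For $\mathfrak{g}=L_{1,6}$ take $\delta=\delta^+$ from Table~$1$ with $a\neq 0$; then $\delta(e_3,e_3)=ae_1-ae_2+be_3$ while $\delta(e_3,e_1)=0$, and, using $\omega(e_1,e_2)=\omega(e_4,e_1)=1$ with all other $\omega(e_i,e_j)=0$, evaluation at $(x,y,z)=(e_3,e_3,e_1)$ gives $\omega(\delta(e_3,e_3),e_1)=a$ on the left and $\omega(e_3,\delta(e_3,e_1))=0$ on the right; hence $a=0$ would be forced, so $\delta\notin BDer_\omega(L_{1,6})$. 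For $\mathfrak{g}=L_{1,8}$ the analogous element has $\delta(e_3,e_3)=ae_1-ae_2+ae_4$ and $\delta(e_3,e_1)=0$, and the same triple yields $\omega(\delta(e_3,e_3),e_1)=2a\neq 0$ versus $0$. Thus $BDer_\omega(\mathfrak{g})\subsetneq BDer(\mathfrak{g})$ in both cases, and together with the previous paragraph this proves the theorem.

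I do not expect a conceptual obstacle here; the real work is bookkeeping. There are roughly twenty algebras, including the one-parameter families $E_{1,\alpha},F_{1,\alpha},G_{1,\alpha},H_{1,\alpha},\tilde A_\alpha,\tilde C_\alpha$, and for each one must track the exact support of $\omega$ and use the full Table~$1$ formula rather than a truncated one. The point to watch is exactly the mechanism behind the $L_{1,6}$ and $L_{1,8}$ counterexamples: the obstruction is a cross term pairing an $\omega$-active direction in the range of $\delta^+$ against the "frozen" argument, so the verification must be done with both the complete list of nonzero $\omega(e_i,e_j)$ and the complete range of $\delta^+$ in hand.
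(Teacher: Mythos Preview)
Your approach is correct and matches the paper's: the paper invokes Proposition~1.5 of \cite{Chen0}, which states that $Der(\mathfrak{g})=Der_\omega(\mathfrak{g})$ for every $4$-dimensional nontrivial $\omega$-Lie algebra except $L_{1,6}$ and $L_{1,8}$, so your ``shortcut'' in fact disposes of all non-exceptional cases at once and no Table~$1$ verification is needed. For the two exceptions the paper, like you, exhibits an explicit biderivation from Table~$1$ failing the $\omega$-identity (it happens to use the parameter $b\neq 0$ and the evaluation $\omega(\delta(e_3,e_4),e_2)$ for $L_{1,6}$, rather than your $a\neq 0$ and triple $(e_3,e_3,e_1)$).
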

\begin{proof}
Let $\delta \in BDer(\mathfrak{g})$ where $\mathfrak{g} \neq L_{1,6},L_{1,8}$, then for all $x \in \mathfrak{g}$, $\delta(x,.)$ (resp. $\delta(.,x)$) is a derivation of $\mathfrak{g}$. Then, by Proposition 1.5 of \cite{Chen0} we have $\delta(x,.)$ (resp. $\delta(.,x)$) is an $\omega$-derivation. Therefore, $\delta$ is an $\omega$-bedirivation of $\mathfrak{g}$.\\
We prove now that $BDer_\omega(L_{1,6}) \neq BDer(L_{1,6})$. Consider a biderivation $\delta \in BDer(L_{1,6})$ defined by 
$$\delta(x,y)=ax_3y_3e_1-ax_3y_3e_2+bx_3y_3e_3+ax_3y_4e_4,\, \forall x=\sum_{i=1}^4x_ie_i, y=\sum_{j=1}^4y_je_j \in L_{1,6}.$$
with $b \neq 0$. Then
$$\omega(\delta(e_3,e_4),e_2)=\omega(be_1,e_2)=b\quad and \omega(e_3,\delta(e_2,e_2))=0.$$
Thus, $\delta$ is not an $\omega$-biderivation.\\
Similar arguments show that $BDer_\omega(L_{1,8}) \neq BDer(L_{1,8})$.
\end{proof}
\section{$\frac{1}{2}$-derivations of simple $\omega$-Lie algebras}
The description of $\frac{1}{2}$-derivations of simple finite-dimensional Lie algebras is given in \cite{13}. There are no non-trivial $\frac{1}{2}$-derivations of simple finite-dimensional Lie algebras. In this section we give the description of $\frac{1}{2}$-derivations of simple finite-dimensional $\omega$-Lie algebras.
\begin{definition}
Let $(\mathfrak{g},[,])$ be an $\omega$-Lie algebra. A linear map $\Delta : \mathfrak{g} \rightarrow \mathfrak{g}$ is called $\frac{1}{2}$-derivation, if 
$$\Delta([x,y])=\frac{1}{2}\Big([\Delta(x),y]+[x,\Delta(y)]\Big),$$
for all $x,y \in \mathfrak{g}$.
\end{definition}
\begin{definition}
Let $(\mathfrak{g},[,])$ be an $\omega$-Lie algebra. A linear map $f : \mathfrak{g} \rightarrow \mathfrak{g}$ is called local $\frac{1}{2}$-derivation, if for all $x\in \mathfrak{g}$ there exists an $\frac{1}{2}$-derivation $\Delta_x$ (depending on $x$) of $\mathfrak{g}$ such that
$f(x)=\Delta_x(x).$
\end{definition}
\begin{definition}
Let $(\mathfrak{g},[,])$ be an $\omega$-Lie algebra. A  map (not necessarily linear)  $f : \mathfrak{g} \rightarrow \mathfrak{g}$ is called $2$-local $\frac{1}{2}$-derivation, if for all $x,y\in \mathfrak{g}$ there exists an $\frac{1}{2}$-derivation $\Delta_{x,y}$ of $\mathfrak{g}$ such that
$$f(x)=\Delta_{x,y}(x)\quad \mbox{and} \quad f(y)=\Delta_{x,y}(y).$$
\end{definition}
\begin{theorem}
    Let $(\mathfrak{g},[,])$ be a complex simple $\omega$-Lie algebra. Then, $\Delta$ is an $\frac{1}{2}$-derivation of $\mathfrak{g}$ if and only if there exists $\lambda \in \mathbb{C}$ such that $\Delta(x)=\lambda x$ for all $x \in \mathfrak{g}$.
\end{theorem}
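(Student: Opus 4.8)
The plan is to split the argument according to the structural dichotomy furnished earlier in the excerpt: by the theorem of \cite{11}, a finite-dimensional complex simple $\omega$-Lie algebra is either an ordinary simple Lie algebra, or it is one of the low-dimensional non-Lie examples, which by the propositions of \cite{12} means $A_\alpha$ ($\alpha\in\mathbb{C}$), $B$, or $C_\alpha$ ($0,-1\neq\alpha\in\mathbb{C}$) in dimension $3$ (there being no non-Lie simple examples in dimension $4$). The ``if'' direction is immediate for either case: if $\Delta(x)=\lambda x$, then $\Delta([x,y])=\lambda[x,y]$ while $\frac12([\lambda x,y]+[x,\lambda y])=\lambda[x,y]$, so $\Delta$ is an $\frac12$-derivation regardless of the bracket. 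So the whole content is the ``only if'' direction, and I would treat the two cases separately.

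For the Lie-algebra case, I would invoke the result of \cite{13} quoted at the head of this section: a complex finite-dimensional simple Lie algebra has no non-trivial $\frac12$-derivations, i.e.\ every $\frac12$-derivation is zero. This is stronger than what we need, but note the discrepancy: in the Lie case the only $\frac12$-derivation is $\Delta=0$, so we should only claim $\lambda=0$ there. The non-trivial scalars $\lambda$ genuinely arise only for the non-Lie $\omega$-Lie algebras, and this is where the work lies. So the cleaner statement to prove is: every $\frac12$-derivation of $\mathfrak{g}$ is scalar multiplication, with $\lambda=0$ forced in the Lie case.

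For the non-Lie case, I would argue directly on each of $A_\alpha$, $B$, $C_\alpha$ using the explicit multiplication tables from the classification theorem (Chen--Liu--Zhang). Write $\Delta(e_i)=\sum_j m_{ji}e_j$ for an unknown matrix $(m_{ji})$, and impose the defining identity $\Delta([e_i,e_j])=\frac12([\Delta(e_i),e_j]+[e_i,\Delta(e_j)])$ for each pair $(i,j)$ with $i<j$. For instance in $C_\alpha$ one has $[e_1,e_2]=e_2$, $[e_1,e_3]=\alpha e_3$, $[e_2,e_3]=e_1$; each of these three relations yields a system of three scalar linear equations in the nine unknowns $m_{ji}$. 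Solving these (routine linear algebra, one table at a time) should collapse the matrix to $\lambda I$. I expect that the presence of the ``large'' semisimple-type bracket here — these algebras being simple — makes the off-diagonal entries vanish quickly and pins the diagonal entries to a common value $\lambda$, exactly as happens in the classical Lie case but now with a non-zero $\lambda$ permitted because the $\omega$-Jacobi correction terms do not interfere with the $\frac12$-derivation identity (which involves only the bracket, not $\omega$).

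The main obstacle is purely bookkeeping: one must run the linear-algebra computation for each of the three families $A_\alpha$, $B$, $C_\alpha$ and check that the parameter $\alpha$ never produces a degenerate case permitting extra solutions (the excluded values $\alpha=0,-1$ for $C_\alpha$ should already be the only bad ones, and they are excluded by hypothesis of simplicity). A slicker route that avoids case analysis would be to adapt the argument of \cite{13}: a $\frac12$-derivation $\Delta$ of any perfect algebra (and simple $\omega$-Lie algebras are perfect) satisfies $[\Delta(x),y]=[x,\Delta(y)]$ after symmetrization in a suitable sense, so $\Delta$ lies in — or is closely controlled by — the centroid; then Schur-type simplicity forces $\Delta=\lambda\,\mathrm{id}$. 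I would first attempt this conceptual argument and fall back on the table computation only if the $\omega$-term obstructs the centroid reduction.
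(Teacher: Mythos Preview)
Your overall plan is essentially the paper's own proof: split into the Lie case (handled by \cite{13}) and the three $3$-dimensional non-Lie simple families $A_\alpha$, $B$, $C_\alpha$, then for each of the latter write $\Delta(e_j)=\sum_i a_{ij}e_i$, impose the $\tfrac12$-derivation identity on each bracket $[e_i,e_j]$, and solve the resulting linear system to force $\Delta=\lambda\,\mathrm{id}$. The paper does exactly this computation, family by family, and does not pursue the centroid shortcut you sketch at the end.

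There is, however, a genuine slip in your treatment of the Lie case. You read ``no non-trivial $\tfrac12$-derivations'' in \cite{13} as ``no non-zero $\tfrac12$-derivations'' and then conclude that $\lambda=0$ is forced for simple Lie algebras. But you yourself just checked, in the ``if'' direction, that $\Delta=\lambda\,\mathrm{id}$ is a $\tfrac12$-derivation of \emph{any} algebra for \emph{any} $\lambda$; this holds equally well when $\omega=0$. In the $\delta$-derivation literature, ``trivial'' means precisely the scalar operators $\lambda\,\mathrm{id}$ (together with, for $\delta=1$, inner derivations), not the zero map alone. So \cite{13} asserts exactly the conclusion you want --- every $\tfrac12$-derivation of a simple Lie algebra is $\lambda\,\mathrm{id}$ --- and there is no discrepancy to flag. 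Drop the parenthetical about $\lambda=0$ being forced; the rest of the argument stands.
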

\begin{proof}
    Let  $(\mathfrak{g},[,])$ be a complex simple $\omega$-Lie algebra and $\Delta$ an $\frac{1}{2}$-derivation of $\mathfrak{g}$.\\
    \textbf{Cases: 1}  $\mathfrak{g}$ is a Lie algebra, then, by \cite{13} any $\frac{1}{2}$-derivation of $\mathfrak{g}$ is trivial.\\
    \textbf{Cases: 2}  $\mathfrak{g}=A_\alpha$, we set $\Delta(e_j)=\sum_{i=1}^3a_{ij}e_i$ for $j=1,2,3$.\\
    The equality $2\Delta(e_1)=[\Delta(e_1),e_2]+[e_1,\Delta(e_2)]$ impies that
    \begin{equation}\label{a1}
     2a_{11}=a_{11}-\alpha a_{31}+a_{22}, \quad 2a_{21}=a_{32}, \quad 2a_{31}=-a_{31}.   
    \end{equation}
    The equality $2\Big(\Delta(e_1)+\Delta(e_2)\Big)=[\Delta(e_1),e_3]+[e_1,\Delta(e_3)]$ impies that
    \begin{equation}\label{a2}
     2a_{11}+2a_{12}=a_{11}+\alpha a_{21}+a_{23}+a_{33}, \quad 2a_{21}+2a_{22}=a_{11}+a_{33}, \quad 2a_{31}+2a_{32} =a_{21}.   
    \end{equation}
     The equality $2\Big(\Delta(e_3)+\alpha\Delta(e_1)\Big)=[\Delta(e_2),e_3]+[e_2,\Delta(e_3)]$ impies that
    \begin{equation}\label{a3}
     2a_{13}+2\alpha a_{11}=\alpha a_{22}-a_{13}+\alpha a_{33}, \quad 2a_{23}+2\alpha a_{21}=a_{12}, \quad 2a_{33}+2\alpha a_{31} =a_{12}+a_{22}+a_{33}.   
    \end{equation}
    By comparing equations (\ref{a1}), (\ref{a2}) and (\ref{a3}) we get $\Delta(e_i)=\lambda e_i$ where $\lambda=a_{11}=a_{22}=a_{33}$.
     \textbf{Cases: 3}  $\mathfrak{g}=B$, we set $\Delta(e_j)=\sum_{i=1}^3a_{ij}e_i$ for $j=1,2,3$.\\
    The equality $2\Delta(e_2)=[\Delta(e_1),e_2]+[e_1,\Delta(e_2)]$ impies that
    \begin{equation}\label{b1}
     2a_{11}=- a_{31}, \quad 2a_{22}=a_{11}+a_{22}+a_{32}, \quad 2a_{32}=a_{32}.   
    \end{equation}
    The equality $2\Delta(e_1)=[\Delta(e_2),e_3]+[e_2,\Delta(e_3)]$ impies that
    \begin{equation}\label{b2}
     2a_{11}=a_{22}+a_{33}, \quad 2a_{21}=a_{12}-a_{13}, \quad 2a_{31} =a_{12}.   
    \end{equation}
     The equality $2\Big(\Delta(e_2)+\Delta(e_3)\Big)=[\Delta(e_1),e_3]+[e_1,\Delta(e_3)]$ impies that
    \begin{equation}\label{b3}
     2a_{12}+2 a_{13}= a_{21}, \quad 2a_{22}+2 a_{23}=a_{11}+a_{23}+a_{33}, \quad 2a_{32}+2 a_{33} =a_{11}+a_{33}.   
    \end{equation}
    By comparing equations (\ref{b1}), (\ref{b2}) and (\ref{b3}) we get $\Delta(e_i)=\lambda e_i$ where $\lambda=a_{11}=a_{22}=a_{33}$.
     \textbf{Cases 4:}  $\mathfrak{g}=C_\alpha\, (\alpha \neq -1,0)$, we set $\Delta(e_j)=\sum_{i=1}^3a_{ij}e_i$ for $j=1,2,3$.\\
    The equality $2\Delta(e_2)=[\Delta(e_1),e_2]+[e_1,\Delta(e_2)]$ impies that
    \begin{equation}\label{c1}
     2a_{12}=-a_{31}, \quad 2a_{22}=a_{11}+a_{22}, \quad 2a_{32}=\alpha a_{32}.   
    \end{equation}
    The equality $2\alpha\Delta(e_3)=[\Delta(e_1),e_3]+[e_1,\Delta(e_3)]$ impies that
    \begin{equation}\label{c2}
     2a_{13}=a_{21}, \quad 2 \alpha a_{23}=a_{23}, \quad 2\alpha a_{33} =\alpha a_{11}+\alpha a_{33}.   
    \end{equation}
     The equality $2\Delta(e_1)=[\Delta(e_2),e_3]+[e_2,\Delta(e_3)]$ impies that
    \begin{equation}\label{c3}
     2 a_{11}=a_{22}+ a_{33}, \quad 2a_{21}=-a_{13}, \quad 2a_{31} =\alpha a_{12}.   
    \end{equation}
    By comparing equations (\ref{c1}), (\ref{c2}) and (\ref{c3}) we get $\Delta(e_i)=\lambda e_i$ where $\lambda=a_{11}=a_{22}=a_{33}$. The proof is completed.
\end{proof}
\begin{theorem}
    Every local  derivation of complex simple $\omega$-algebra  is a derivation.
\end{theorem}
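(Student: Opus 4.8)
The statement is in fact an immediate consequence of the theorem of Section 4 asserting that every local derivation of a finite-dimensional semisimple $\omega$-Lie algebra over $\mathbb{C}$ is a derivation, since a simple algebra is semisimple; so the first move is simply to invoke that result. For a self-contained argument, the plan is to reduce to the classification: by \cite{11} a finite-dimensional semisimple $\omega$-Lie algebra that is not a Lie algebra has dimension $\leq 4$, and by Proposition 7.1 and Proposition 7.2 of \cite{12} the only non-Lie complex simple $\omega$-Lie algebras are the three-dimensional algebras $A_\alpha$ ($\alpha\in\mathbb{C}$), $B$, and $C_\alpha$ ($\alpha\neq 0,-1$). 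Thus $\mathfrak{g}$ is either a complex simple Lie algebra or one of these three.

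In the Lie algebra case there is nothing to do: every local derivation of a finite-dimensional complex simple Lie algebra is a derivation by \cite{Ayo2}. For each of $A_\alpha$, $B$, $C_\alpha$ I would argue as in Section 4. Fix a basis $e_1,e_2,e_3$, write a local derivation $\Delta$ as $\Delta(x)=B\bar{x}$ with $B=(b_{ij})\in M_3(\mathbb{C})$ unknown, and recall from \cite{Chen2} (or \cite{oubb}) that every derivation of $\mathfrak{g}$ has matrix lying in a fixed linear subspace $\mathcal{D}\subseteq M_3(\mathbb{C})$ --- in fact $\mathcal{D}=\mathrm{ad}(\mathfrak{g})$, of dimension $3$. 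Evaluating the identity $\Delta(x)=D_x(x)$ coordinatewise at $x=e_1,e_2,e_3$ and at $x=e_1+e_2,\,e_1+e_3,\,e_2+e_3$ produces a system of linear equations in the $b_{ij}$ whose solution set is exactly $\mathcal{D}$; hence $B\in\mathrm{Der}(\mathfrak{g})$ and $\Delta$ is a derivation.

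The main obstacle is the parameter in the $C_\alpha$ family: the structure constants and the derivation algebra vary with $\alpha$, so one must verify that the linear relations extracted from the six test vectors determine $B$ for every admissible $\alpha$, treating separately any value of $\alpha$ at which $\dim\mathrm{Der}(C_\alpha)$ jumps. The very same scheme, with ``derivation'' replaced by ``$\frac12$-derivation'' and the subspace $\mathcal{D}$ replaced by the line of scalar matrices (by the preceding theorem of this section, every $\frac12$-derivation of a complex simple $\omega$-Lie algebra is $\lambda\,\mathrm{Id}$), shows that every local $\frac12$-derivation of such an algebra is a $\frac12$-derivation.
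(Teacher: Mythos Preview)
The theorem's title is misleading: in the paper it sits in Section~7, immediately after the characterization of $\tfrac12$-derivations, and the paper's proof is in fact a proof that every \emph{local $\tfrac12$-derivation} of a complex simple $\omega$-Lie algebra is a $\tfrac12$-derivation. Your last paragraph is exactly the paper's argument: since every $\tfrac12$-derivation is $\lambda\,\mathrm{Id}$, a local $\tfrac12$-derivation $f$ satisfies $f(x)=\lambda_x x$ for each $x$; evaluating on basis vectors $e_i$ shows the matrix of $f$ is diagonal, and evaluating on $e_i+e_j$ forces all diagonal entries to coincide, so $f=\lambda\,\mathrm{Id}$.

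The bulk of your proposal --- invoking the Section~4 semisimple result, or alternatively reducing via \cite{11} and \cite{12} to the Lie case (handled by \cite{Ayo2}) and to the three-dimensional algebras $A_\alpha$, $B$, $C_\alpha$ (handled in \cite{oubb}) --- correctly proves the \emph{literal} statement about ordinary local derivations, but that is the content of a different theorem in the paper (Theorem~4.3). So there is no mathematical gap in what you wrote; you simply devoted most of the effort to the wrong interpretation of an ambiguously titled theorem, while relegating the intended statement and its (correct) proof to a closing remark.
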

\begin{proof}
   Let $f$ be a local $\frac{1}{2}$-derivation of $\mathfrak{g}$, then for any $x\in \mathfrak{g}$ there exists an $\frac{1}{2}$-derivatinon $\Delta_x$ of $\mathfrak{g}$ such that $f(x)=\Delta_x(x)$, then by Theorem \ref{th} there exists $\lambda_x \in \mathbb{C}$ such that $f(x)=\lambda_x x$. Let $B=\lbrace e_1,e_2,\dots,e_n\rbrace$ be a basise of $\mathfrak{g}$, then the matrix $M$ of $f$ has the following form $M=diag(\lambda_1,\lambda_2,\dots,\lambda_n)$. In other hand, $\lambda_{i+j}(e_i+e_j)=f(e_i+e_j)=f(e_i)+f(e_j)=\lambda_ie_i+\lambda_je_j$, therefore, $\lambda_i=\lambda_{i+j}=\lambda_j$. Thus, $M=\lambda I$. Which ends the proof.
\end{proof}
\begin{theorem}
    Every  $2$-local derivation of complex simple $\omega$-algebra  is a derivation.
\end{theorem}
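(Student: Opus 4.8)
The plan is to reduce the statement to two cases that are already settled, by invoking the structural dichotomy for finite-dimensional complex simple $\omega$-Lie algebras. First I would recall that, by Theorem [\cite{11}, Theorem 2], every finite-dimensional semisimple $\omega$-Lie algebra over $\mathbb{C}$ is either a genuine Lie algebra or has dimension at most $4$. Combining this with Proposition [\cite{12}, Proposition 7.2], which rules out non-Lie $4$-dimensional complex simple $\omega$-Lie algebras, and Proposition [\cite{12}, Proposition 7.1], which identifies the non-Lie $3$-dimensional complex simple $\omega$-Lie algebras as exactly $A_\alpha$, $B$, and $C_\alpha$, one sees that a finite-dimensional complex simple $\omega$-Lie algebra $\mathfrak{g}$ satisfies exactly one of: (i) $\omega = 0$, so that $\mathfrak{g}$ is a simple Lie algebra; or (ii) $\omega \neq 0$, so that $\mathfrak{g}$ is one of the $3$-dimensional algebras $A_\alpha$, $B$, $C_\alpha$.

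With this dichotomy fixed, let $\Delta$ be a $2$-local derivation of $\mathfrak{g}$. In case (i), $\mathfrak{g}$ is a finite-dimensional complex simple, hence semisimple, Lie algebra, and the result of \cite{Ayo3} (already used in Section 4) guarantees that every $2$-local derivation of such an algebra is a derivation; thus $\Delta$ is a derivation. In case (ii), $\mathfrak{g}$ is $3$-dimensional, and I would invoke the result proved in \cite{oubb} that every $2$-local derivation of a $3$-dimensional complex $\omega$-Lie algebra is a derivation, applied to the representatives $A_\alpha$, $B$, $C_\alpha$; again $\Delta$ is a derivation. Since the two cases are exhaustive, this completes the argument. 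Alternatively, one may bypass the case split entirely: a simple $\omega$-Lie algebra is in particular semisimple, so the statement is an immediate specialization of the Section 4 theorem asserting that every $2$-local derivation of a finite-dimensional semisimple complex $\omega$-Lie algebra is a derivation.

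Because the actual verification collapses into citing prior theorems, the only delicate point is ensuring that the dichotomy is genuinely exhaustive, i.e. that no finite-dimensional complex simple $\omega$-Lie algebra escapes both the simple-Lie family and the list $A_\alpha$, $B$, $C_\alpha$. This is exactly what the combination of [\cite{11}, Theorem 2] with Propositions [\cite{12}, Proposition 7.1] and [\cite{12}, Proposition 7.2] provides, so I do not anticipate any real obstacle beyond invoking these classification results accurately and confirming that the hypothesis of finite dimensionality (implicit throughout the paper) is in force.
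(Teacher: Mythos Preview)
Your argument is correct for the statement \emph{as literally written}, and in fact it simply reproves (or specializes) the Section~4 result that every $2$-local derivation of a finite-dimensional semisimple complex $\omega$-Lie algebra is a derivation. However, this is not what the paper is actually proving here. The theorem sits in Section~7, immediately after the characterization of $\tfrac{1}{2}$-derivations, and its statement contains a typo: both occurrences of ``derivation'' should read ``$\tfrac{1}{2}$-derivation''. The paper's own proof makes this unambiguous --- it begins ``Let $f$ be a $2$-local $\tfrac{1}{2}$-derivation of $\mathfrak{g}$'' and uses the preceding theorem (that every $\tfrac{1}{2}$-derivation of a simple $\omega$-Lie algebra is $\lambda\,\mathrm{id}$ for some $\lambda\in\mathbb{C}$).

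Consequently your reduction to \cite{Ayo3} and \cite{oubb} targets the wrong notion: those references treat ordinary derivations, not $\tfrac{1}{2}$-derivations, so the citations do not apply to the intended claim. The paper's argument is instead a direct rigidity argument: given a $2$-local $\tfrac{1}{2}$-derivation $f$, for each pair $x,y$ one obtains a scalar $\lambda_{x,y}$ with $f(x)=\lambda_{x,y}x$ and $f(y)=\lambda_{x,y}y$; comparing two such pairs at a common nonzero element forces all the scalars to coincide, so $f$ is a fixed scalar multiple of the identity and hence a $\tfrac{1}{2}$-derivation. No classification dichotomy or case split is needed. To align your proof with the paper, you should read the statement as concerning $\tfrac{1}{2}$-derivations and invoke the Section~7 characterization rather than the Section~4 machinery.
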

\begin{proof}
    Let $f$ be a $2$-local $\frac{1}{2}$-derivation of $\mathfrak{g}$, then for any $x,y\in \mathfrak{g}$ there exists an $\frac{1}{2}$-derivatinon $\Delta_{x,y}$ of $\mathfrak{g}$ such that $f(x)=\Delta_{x,y}(x)$ and $f(y)=\Delta_{x,y}(y)$, then by Theorem \ref{th} there exists $\lambda_{x,y} \in \mathbb{C}$ such that $f(x)=\lambda_{x,y} x$ and $f(y)=\lambda_{x,y} y$. Let $z\in \mathfrak{g}$, then there exits $\lambda_{y,z}\in \mathbb{C}$ such that $f(z)=\lambda_{y,z} z$ and $f(y)=\lambda_{y,z} y$. Then, $\lambda_{x,y}y=f(y)=\lambda_{y,z}y$, so $\lambda_{x,y}=\lambda{y,z}$. Therefor, $f(x)=\lambda_{y,z}x$ i.e not depending on $x$. Therefore, $f$ is a $2$-local $\frac{1}{2}$-derivation.
\end{proof}
A relation between $\frac{1}{2}$-derivations of Lie algebras and transposed Poisson algebras has
been established by Ferreira et Al. in \cite{fer}. \textbf{Natural Question:} How can one define transposed Poisson structures on $\omega$-Lie algebras, and what is their relationship with $\frac{1}{2}$-derivations?
$$\noindent \textbf{Conclusion:}$$

The structure constants play a key role in our analysis. A computational challenge would be to develop an algorithm (using any computational tools such as Mathematica, Maple, or a computer
 algebra system like GAP) to systematically determine the biderivation and
local (2-local) derivations (automorphismes) in $\omega$-algebras of dimension $n \geq 5$, building upon the canonical forms proposed in this work.\\
$$\noindent \textbf{Acknowledgements:}$$ The authors thank the referees for their valuable comments that contributed to a sensible improvement of the paper.

\section*{Declarations}
\noindent \textbf{Funding Declaration:} The authors declare that no funds, grants, or other financial support were received during the preparation of this manuscript.\\
\noindent \textbf{Conflicts of interest:} The authors declare that they have no conflict of interest.\\
\noindent \textbf{Dataavailability:}No data was used for the research described in the article.

\end{document}